\numberwithin{equation}{section}  \makeatletter\@addtoreset{equation}{section}
			\newtheorem{theorem}{Theorem}[section]
			\newtheorem{definition}[theorem]{Definition}
			\newtheorem{lemma}[theorem]{Lemma}
			\newtheorem{corollary}[theorem]{Corollary}
			\newtheorem{remark}[theorem]{Remark}
\newcommand{\C}{\mathbb C}    \newcommand{\R}{\mathbb R}   \newcommand{\Z}{\mathbb Z} 	 
\newcommand{\Hq}{\mathbb H} \newcommand{\Hqr}{\widetilde{\mathbb{H}}} \newcommand{\Sq}{\mathbb S}
\newcommand{\bq}{\overline{q} } \newcommand{\Cn}{C_n(I)}
\newcommand{\norm}[1]{{\left\|{#1}\right\|}}    
   \newcommand{\scal}[1]{{\left\langle{#1}\right\rangle}}
    \newcommand{\bz}{\overline{z}}  \newcommand{\bw}{\overline{w}}
\begin{document}
\title[Generalized quaternionc Bargmann-Fock spaces and Segal-Bargmann transforms]{Generalized quaternionic Bargmann-Fock spaces and associated Segal-Bargmann transforms}

\thanks{The authors are partially supported by the Hassan II Academy of Sciences and Technology.
The research work of A.G. was partially supported by a grant from the Simons Foundation.}

\author{Amal El Hamyani}  
\author{Allal Ghanmi}    
\dedicatory{Dedicated to Professor Ahmed Intissar on the occasion of his 65th birthday }
\address{ Analysis and Spectral Geometry (A.G.S.),\newline
          Laboratory of Mathematical Analysis and Applications (L.A.M.A.),\newline
          Department of Mathematics, P.O. Box 1014,  Faculty of Sciences,\newline
          Mohammed V University in Rabat, Morocco}
\email{amalelhamyani@gmail.com}
 \email{ag@fsr.ac.ma}

\begin{abstract}
We introduce new classes of right quaternionic Hilbert spaces of Bargmann-Fock type $\mathcal{GB}_{m}^{2}(\Hq)$, labeled by nonnegative integer $m$,
generalizing the so-called slice hyperholomorphic Bargmann-Fock space introduced by Alpay, Colombo, Sabadini and Salomon
in \cite{AlpayColomboSabadiniSalomon2014}. They are realized as $L^2$-eigenspaces of a sliced second order differential operator.
The concrete description of these spaces is investigated and involves the so-called quaternionic Hermite polynomials. Their basic properties are discussed and the explicit formulae of their reproducing kernels are given.
Associated Segal-Bargmann transforms, generalizing the one considered by Diki and Ghanmi in \cite{DG2017}, are also introduced and studied. Connection to the quaternionic Fourier-Wigner transform is established.
\end{abstract}

\maketitle
\section{Introduction}

The classical Bargmann-Fock space is defined as the space of all $e^{-|z|^2} dxdy$-square integrable holomorphic functions on the complex plane.
This is a phase space which is known to be unitary isomorphic to the quantum mechanical configuration space $L^2(\R;dx)$ by means of the classical Segal-Bargmann transform (see for examples \cite{Bargmann1961,Folland1989,Zhu2012}).
A quaternionic analogue of this space in the context of slice hyperholomorphic functions of one quaternionic variable is introduced in \cite{AlpayColomboSabadiniSalomon2014} as
  \begin{align}\label{SHBFspace}
   \mathcal{F}^{2}_{slice}(\Hq) = \mathcal{SR}(\Hq) \cap L^2(\C_I;e^{-|q|^2}d\lambda_I),
   \end{align}
  where $ \mathcal{SR}(\Hq)$ denotes the space of all slice (left) regular $\Hq$-valued functions on $\Hq$ and $\C_I$; $I\in \mathbb{S}=\lbrace{q\in{\Hq};q^2=-1}\rbrace$, is a slice in $\Hq$.
It is shown there that $\mathcal{F}^{2}_{slice}(\Hq)$ is independent of $I$ and is a reproducing quaternionic Hilbert space.
A quaternionic analogue of the classical Segal-Bargmann is recently introduced and connect the slice hyperholomorphic Bargmann-Fock space $\mathcal{F}^{2}_{slice}(\Hq)$ to the classical $L^2$-Hilbert space of quaternionic-valued functions on the real line (see \cite{DG2017} for details).

In the present paper, we extend the notion of slice Bargmann-Fock space to the context of non hyperholomorphic functions.
The new space $\mathcal{GB}_{m}^{2}(\Hq)$ labeled by nonnegative integer $m=0,1,2, \cdots$, is connected to the specific $L^2$-eigenspaces
\begin{equation}\label{37}
\mathcal{F}_{m}^{2}(\Hq)= \left\{ f \in L^{2}(\Hq;e^{-\mid q\mid^{2}}d\lambda); \, \Delta_{q}f=m f\right\},
\end{equation}
of the  second-order differential operator
\begin{equation}\label{Laplaceian}
\Delta_{q}=\dfrac{-\partial^{2}}{\partial q\partial \overline{q}}+\overline{q}\dfrac{\partial}{\partial \overline{q}},
\end{equation}
The spaces $\mathcal{GB}_{m}^{2}(\Hq)$, defined on the whole $\Hq$, are specific subspaces of $\mathcal{F}_{m}^{2}(\Hqr)$.
It will be shown that the particular case of $\mathcal{F}_{0}^{2}(\Hq)$ and $\mathcal{GB}_{0}^{2}(\Hq)$, corresponding to $m=0$, gives rise to the full hyperholomorphic Bargmann-Fock space
  \begin{align}\label{fullHBFspace}
   \mathcal{F}^{2}_{full}(\Hq) = \mathcal{SR}(\Hq) \cap L^2(\Hq;e^{-\nu|q|^2}d\lambda)
   \end{align}
and the slice hyperholomorphic Bargmann-Fock space $\mathcal{F}^{2}_{slice}(\Hq)$ given through \eqref{SHBFspace}, respectively.
The $\mathcal{GB}_{m}^{2}(\Hq)$ are then called generalized quaternionic Bargmann-Fock spaces.
Our main  purpose is to give a concrete description of the spaces $\mathcal{F}_{m}^{2}(\Hqr)$ and $\mathcal{GB}_{m}^{2}(\Hq)$. We show that the $\mathcal{GB}_{m}^{2}(\Hq)$ is a reproducing kernel quaternionic Hilbert space. The expression of the corresponding reproducing kernel is given explicitly in Theorem \ref{thm:RepKernel}.
This was possible by solving the partial differential equation of hypergeometric type arising from the (right) eigenvalue problem
$\Delta_{q}f= f\mu $ on $\Hqr:=\mathbb{H}\setminus{\R}$, and manipulating the asymptotic behaviour of the involved confluent hypergeometric function.
We show in particular that the spectrum of $\Delta_{q}$ is purely discrete and constituted of the eigenvalues $\mu=\mu_m= m$ (Landau levels) occurring with infinite degeneracy (see Theorem \ref{thm:description}).
A concrete description of the elements of $\mathcal{F}_{m}^{2}(\Hqr)$ in $\mathcal{C}^\infty$ and $L^2$ pictures is given respectively by Theorems \ref{thm:CInftydescription} and \ref{thm:description}.
In such description, the so-called quaternionic Hermite polynomials play a crucial role. Such polynomials are introduced in \cite{Thiru2,Thiru1} and
an accurate systematic study of them can be found in \cite{El Hamyani}.

Associated Segal-Bargmann transform is then introduced and studied in some details. It generalizes the one considered in \cite{DG2017}.
The corresponding kernel function involves the real Hermite polynomial $H_{m}$ (see Theorem \ref{thm:KernelFctSBTm}).
As basic result, we show that this transform maps isometrically the $L^2$-Hilbert space of left-sided quaternionic-valued functions on the real line onto
$\mathcal{GB}_{m}^{2}(\Hq)$ (see Theorem \ref{thm:isometrySBTm}). The connection to a
quaternionic Fourier-Wigner transform is also established (see Theorem \ref{thm:FourierWigner-SBT}).

The rest of the paper is structured as follows
\begin{itemize}
\item Preliminaries.
\item Discussion of the problematic.
\item $\mathcal{C}^\infty$ and $L^{2}$-concrete spectral analysis of the operator $\Box_{q}$ on $\Hqr$. 
\item Generalized quaternionic Bargmann spaces and their reproducing kernels.
\item Generalized quaternionic Bargmann transforms $\mathcal{B}_m$.
\item Left-sided quaternionic Fourier-Wigner transform.
\end{itemize}

\section{Preliminaries} 

 We denote by $\Hq$ the divisor algebra of real quaternions. The standard basis $\lbrace{1,i,j,k}\rbrace$ satisfies the Hamiltonian multiplication $\mathbf{i}^2=\mathbf{j}^2=\mathbf{k}^2=\mathbf{i}\mathbf{j}\mathbf{k}=-1$, $\mathbf{i}\mathbf{j}=-\mathbf{j}\mathbf{i}=\mathbf{k}$, $\mathbf{j}\mathbf{k}=-\mathbf{k}\mathbf{j}=\mathbf{i}$ and $\mathbf{k}\mathbf{i}=-\mathbf{i}\mathbf{k}=\mathbf{j}.$
 The algebraic representation of a quaternion $q$ is $q=x_0+x_1\mathbf{i}+x_2\mathbf{j}+x_3\mathbf{k} \in{\Hq}$ with $x_0,x_1,x_2,x_3\in{\mathbb{R}}$.
  Accordingly, the quaternionic conjugate is defined to be $\overline{q}=x_0-x_1i-x_2j-x_3k=Re(q)-Im(q)$, so that
 $\overline{ pq }= \overline{q}\, \overline{p}$ for $p,q\in \Hq$. The modulus of $q$ is defined to be
 $$|q|=\sqrt{q\overline{q}}=\sqrt{x_0^2+x_1^2+x_2^2+x_3^2}.$$ 
While, with respect to the spherical coordinates
$$\left\{
  \begin{array}{ll}
    x_{0}=r\cos\theta, \\
   x_{1}=r\sin\theta\sin\phi\cos\psi,\\
    x_{2}=r\sin\theta\sin\phi\sin\psi,  \\
    x_{3}=r\sin\theta\cos\phi.
\end{array}
\right.
$$
with $r=|q|\in [0,+\infty[$, $\psi ,\phi \in [0,\pi]$ and $\theta \in [0,2\pi]$,
 the polar representation is given by
\begin{align} \label{polarRep}
q=re^{I\theta},
\end{align}
where $I$ is given by
 $
I= \sin\phi\cos\psi \mathbf{i} +\sin\phi\sin\psi \mathbf{j} + \cos\phi \mathbf{k}
$ 
and belongs to the unit sphere $\mathbb{S}=\lbrace{q\in{Im\Hq}; \vert{Im(q)}\vert=1}\rbrace$ in $Im\Hq$.
Notice for instance that $\mathbb{S}$ can be identified with the set of imaginary units $\mathbb{S}=\lbrace{q\in{\Hq};q^2=-1}\rbrace$.

Another interesting representation of $q\in \Hq$ is given by $q=x+I y$ for some real numbers $x$ and $y$ and imaginary unit $I\in \mathbb{S}$.
 Such decomposition is unique for any $q\in \Hq\setminus \R$ with $y>0$.
For every fixed $I\in{\mathbb{S}}$, the slice $L_I=\C_I := \mathbb{R}+\mathbb{R}I$ is isomorphic to the complex plane $\C$
so that it can be considered as a complex plane in $\Hq$ passing through $0$, $1$ and $I$.
Thus, $\Hq$ can be seen as the infinite union of complex planes, the slices. Their intersection is the real line $\mathbb{R}$.

The last representation is the basic idea in developing the theory of quaternionic slice hyperholomorphic functions that has been introduced
by Gentilli and Struppa in the seminal work \cite{GentiliStruppa07}. 
Since then, they  have been object of intensive research and the corresponding hyper-complex analysis theory have been developed. It has found many interesting applications in operator theory,
quantum physics, Schur analysis \cite{ColomboSabadiniStruppa2011,AlpayColomboSabadini2012,AlpayColomboSabadini2013,
GentiliStoppatoStruppa2013,AlpayBolotnikovColomboSabadini2016}.
The interesting readers can refer to  \cite{GentiliStruppa07,ColomboSabadiniStruppa2011,GentiliStoppatoStruppa2013,
CP,ColomboSabadiniStruppa2016} for more details.
According to \cite{GentiliStruppa07}, the left slice derivative $\partial_{s}f$ of a given real differential
quaternionic-valued function $f$ on a given open domain $\Omega\subset \Hq$ is defined by
\begin{equation}\label{12}
\partial_{s}f(q)=\left\{
                   \begin{array}{ll}
                     \dfrac{1}{2}\left(\dfrac{\partial f_{I_{q}}}{\partial x}-I_{q}\dfrac{\partial f_{I_{q}}}{\partial y}\right)(q), & \hbox{if $\quad q=x_{q}+y_{q}I_{q} \in \Omega \setminus\R$;} \\
                     \dfrac{df}{dx}(x_{q}), & \hbox{if $\quad q=x_{q}\in \Omega\cap \R$},
                   \end{array}
                 \right.
\end{equation}
 where $f_I(q=x+Iy)$, denotes the restriction of $f$ to $\Omega_{I}:= \Omega \cap L_{I}$.
 Thus, $f$ is said to be slice (left) regular, if it is a real differentiable on $\Omega$ and its restriction $f_I$ is holomorphic on $\Omega_I$ for every $I\in \mathbb{S}$. That is it has continuous partial derivatives with respect to $x$ and $y$ and the function $\overline{\partial_I} f : \Omega_I \longrightarrow \Hq$ defined by
$$
\overline{\partial_I} f(x+Iy) := 
\dfrac{1}{2}\left(\frac{\partial }{\partial x}+I\frac{\partial }{\partial y}\right)f_I(x+yI)
$$
 vanishes identically on $\Omega_I$.
The corresponding space, denoted $\mathcal{SR}(\Omega)$, is endowed with the natural uniform convergence on compact sets. It turns out that $\mathcal{S}\mathcal{R}(\Omega)$ is a right vector space over the noncommutative field $\Hq$ containing the power series $\sum_n q^na_n$; $a_n\in\Hq$, defined in its domain of convergence, which is proved to be an open ball $B(0,R):= \{q\in \Hq; \, |q| < R\}$.
Conversely, every given $\Hq$-valued slice regular function $f$ on $B(0,R)\subset \Hq$ has the following series expansion (\cite{GentiliStruppa07})
\begin{align}\label{expansion}
f(q)=\sum_{n=0}^{+\infty} q^{n} a_n; \qquad a_n = \frac{1}{n!}\frac{\partial^{n}f}{\partial x^{n}}(0).
\end{align}

Now, let $L^{2}(\Hq;e^{-|q|^2}d\lambda)$ denote the right Hilbert space of all quaternionic-valued square integrable functions on $\Hq$ with respect to
the inner product
\begin{align}\label{SP-full}
\scal{ f, g} =\int_{\Hq}\overline{f(q)}g(q) e^{-|q|^{2}}d\lambda(q) .
\end{align}
$d\lambda(q)= dx_0dx_1dx_2dx_3$ being the Lebesgue measure on $\Hq\equiv \R^4$.
The concrete description of $L^{2}(\Hq;e^{-|q|^2}d\lambda)$ is shown (\cite{El Hamyani}) to be given through the quaternionic Hermite polynomials defined by
\begin{equation}\label{20}
H_{m,n}(q,\overline{q})=m!n! \sum_{j=0}^{\min(m,n)}\dfrac{(-1)^{j}}{j!} \frac{q^{m-j}\overline{q}^{n-j}}{(m-j)!(n-j)!}.
\end{equation}
 They can be seen as natural extension of the complex Hermite polynomials. Their Rodriguez' formula involves the slice derivative
 \begin{equation}\label{SliceDer}
 \dfrac{\partial f}{\partial q}(q) = \partial_{I_q} f(x+I_qy) :=
\dfrac{1}{2}\left(\frac{\partial }{\partial x} - I_q\frac{\partial }{\partial y}\right)f_{I_q}(x+yI_q)
\end{equation}
and its quaternionic conjugate
 \begin{equation}\label{SliceDerConj}
  \dfrac{\partial f}{\partial \overline{q}}(q) = \overline{\partial_{I_q}} f(x+I_qy) :=
\dfrac{1}{2}\left(\frac{\partial }{\partial x} + I_q\frac{\partial }{\partial y}\right)f_{I_q}(x+yI_q)
\end{equation}
The basic and needed properties of $H_{m,n}(q,\overline{q})$ are summarized in the following items

\begin{itemize}

\item  The expression of the quaternionic Hermite polynomials can be written in terms of the confluent hypergeometric function ${_1F_1}$ as follows \begin{equation}\label{16}
H_{m,n}(q,\overline{q})= c_{m,n} r^{|m-n|}e^{(m-n)I\theta}
                                        {_1F_1}\left( \begin{array}{c} -\min(m,n) \\ |m-n|+1 \end{array}\bigg | r^{2} \right)
\end{equation}
where $q=re^{I\theta}$ with $r=|q| > 0$, $\theta \in [0,2\pi]$ and $I\in \Sq $, and
$$ c_{m,n} := \dfrac{(-1)^{\min(m,n)}\max(m,n)!}{|m-n|!}.$$

\item An exponential representation is given by
\begin{equation}\label{15}
H_{m,n}(q,\overline{q})=\exp\left(-\dfrac{\partial^{2}}{\partial q\partial \overline{q}}\right)(q^{m}\overline{q}^{n})= \exp\left(-\frac{1}{4}\left(\dfrac{\partial^{2}}{\partial x^2}+\dfrac{\partial^{2}}{\partial y^2}\right)\right)(q^{m}\overline{q}^{n})
\end{equation}

\item The quaternionic Hermite polynomials $H_{m,n}(q,\overline{q})$ belong to $L^{2}(\Hq;e^{-|q|^2}d\lambda)$ with square norm given by $\norm{H_{m,n}}^{2}_{L^{2}(\Hq;e^{-|q|^2}d\lambda)}=\pi m!n!$.
    Moreover, they form a complete orthogonal system. 
    We should point out that the expansion in terms of these polynomials is sliced (see Remark 3.4 in  \cite{El Hamyani}).

 \item We have the following bilateral generating function involving both the real and quaternionic Hermite polynomials
 \begin{equation}\label{gf}
 \sum_{n=0}^{+\infty}\dfrac{H_{n}(x)H_{m,n}(q,\bq)}{n!} =e^{-\bq^{2}+2x\bq} H_{m}\left(\bq+\frac{q}{2}-x\right).
 \end{equation}
\end{itemize}
Proofs and further properties of such polynomials can be found in \cite{El Hamyani}. 


\section{Discussion of the problematic}

In order to give a concrete description of the $\mathcal{C}^\infty$ and $L^{2}$-spectral analysis of the operator $\Delta_{q}$, we have to surmount two problems.
  The first one is connected to the uniqueness problem of the polar representation $q=re^{I\theta}$ and the slice representation $q=x+I y$, of given $q\in \Hq$.
  This can be removed by restricting $q$ to belong to specific subspaces of $\Hq$.
  Another problem that arise lies in the definition of the slice derivative given by \eqref{12}.
  Indeed, the operator $\Delta_{q}$ in \eqref{Laplaceian} takes the form
\begin{equation}\label{LaprealcoordH1}
\Box_{q} = -\frac{1}{4} \left( \frac{\partial^2}{\partial x^2} + \frac{\partial^2}{\partial y^2} \right) + \frac{1}{2} \left(x\frac{\partial}{\partial x} + y\frac{\partial}{\partial y} \right)  + \frac{I_q}{2} \left( x\frac{\partial}{\partial y} - y\frac{\partial}{\partial x} \right),
\end{equation}
on $\Hqr=\Hq\setminus \R$ and the form
\begin{equation}\label{LaprealcoordH2}
\Box_{x} = - \frac{\partial^2}{\partial x^2} + x\frac{\partial}{\partial x}
\end{equation}
on $\R$. A unified explicit form of the slice derivative is the following
\begin{equation}\label{SliceDerUnified}
\frac{\partial}{\partial q} = \frac{1}{2}  \left(\left(1+\chi_{\R}(q)\right)\frac{\partial}{\partial x} - \left(1-\chi_{\R}(q)\right) I_q \frac{\partial}{\partial y}\right)
\end{equation}
and therefore of the Laplacian $\Delta_{q}$ reads
\begin{align}\label{LaprealcoordHUnified}
\Delta_{q} = -\frac{1}{4}& \left\{\left(1+\chi_{\R}(q)\right)^2 \frac{\partial^2}{\partial x^2}
             + \left(1-\chi_{\R}(q)\right)^2 \frac{\partial^2}{\partial y^2} \right\}
            \\& +\frac{1}{2} \left(1+\chi_{\R}(q)\right) \left(x\frac{\partial}{\partial x} + y\frac{\partial}{\partial y} \right)
             +\frac{I_q}{2}  \left(1-\chi_{\R}(q)\right)\left( x\frac{\partial}{\partial y} - y\frac{\partial}{\partial x} \right). \nonumber
\end{align}
Accordingly, the operator $\Delta_{q}$ seen as second order differential operator on $\R^2$, is not elliptic nor uniform elliptic.
  However, it is semi-elliptic. 
  Indeed, the eigenvalues of the matrix
  $$  -\frac{1}{4} \left( \begin{array}{cc} \left(1+\chi_{\R}(q)\right)^2  & 0 \\ 0 & \left(1-\chi_{\R}(q)\right)^2  \end{array}\right)  ,$$
  associated to $\Delta_{q}$, are 
  clearly non-negatives (but not necessary positives).
Therefore, to provide a concrete description of the $L^{2}$-spectral analysis of the operator $\Delta_{q}$, we have to distinguish cases and consider special subspaces.

The strategy we will follow is to begin by studying the eigenvalue problem of $\Box_{q}$ in \eqref{LaprealcoordH1}
on $\Hqr$ when acting on both the $\mathcal{C}^\infty$ and $L^2$ quaternionic-valued functions on $\Hqr$ and next extend, in some how, the result to the whole $\Hq$. In fact, this will motivate the definition we will give to the generalized quaternionic Bargmann spaces that will generalize the slice hyperholomorphic Bargmann-Fock space \eqref{SHBFspace}.
It should be noted here that the Borel measurable set $\R$ is a negligeable set with respect to the gaussian measure on $\Hq$, and therefore
 \begin{align}\label{integration}
 \int_{\Hq} f(q) e^{-|q|^2}d\lambda(q) & = \int_{\Hqr} f(q) e^{-|q|^2}d\lambda(q) \nonumber
  \\& =  \int_{\R^{+*}\times ]0,2\pi[\times \Sq} f(re^{I\theta}) e^{-r^2} rdrd\theta d\sigma(I_q),
\end{align}
where $dr$ (resp. $d\theta$) denotes the Lebesgue measure on positive real line (the unit circle) and $d\sigma(I)$ stands for the standard area element on $\Sq$. This observation will be used systematically  in particular to obtain such extension
to the whole $\Hq$.


\section{$\mathcal{C}^\infty$ and $L^{2}$-concrete spectral analysis of the operator $\Box_{q}$ on $\Hqr$}
In this section, we consider the operator $\Box_{q}$ given through \eqref{LaprealcoordH1}.
Its expression in the polar coordinates $q=re^{I\theta}$, with $r>0$, $0\leq \theta \leq 2\pi$ and $I\in \mathbb{S}$, is given by
 the following

 \begin{lemma}\label{polar-action}
We have 
\begin{equation}\label{1}
\Box_{q}=-\dfrac{1}{4}\left(\dfrac{\partial^{2}}{\partial r^{2}}+\left[\dfrac{1}{r}-2r\right]\dfrac{\partial}{\partial r}+\dfrac{\partial^{2}}{r^{2} \partial \theta^{2}}-2I\dfrac{\partial}{\partial \theta}\right).
\end{equation}
Moreover, its action on the functions $e^{I n\theta}  a^{I}_{n}(r)$ is given by
\begin{equation}\label{action}
\Box_{q} e^{I n\theta}  a^{I}_{n}(r)=-\dfrac{e^{I n\theta}}{4r^2} \left[r^2\dfrac{\partial^{2}}{\partial r^{2}}+ (1-2r^2)r \dfrac{\partial}{\partial r}
+ (2nr^2 -n^2)\right]   a^{I}_{n}(r).
\end{equation}
 \end{lemma}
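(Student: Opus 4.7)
The plan is to carry out the change of variables from Cartesian to polar coordinates on a fixed slice, and then substitute directly into \eqref{LaprealcoordH1}. Fix $I\in\mathbb{S}$ and restrict to $L_I\setminus\R$, on which every $q=x+Iy$ with $y\neq 0$ admits the unique polar writing $q=re^{I\theta}$, equivalently $x=r\cos\theta$, $y=r\sin\theta$ with $r>0$ and $\theta\in(0,2\pi)$. On this slice $I_q=I$ is constant, so the coefficient $I_q/2$ appearing in \eqref{LaprealcoordH1} is just a constant quaternion.

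First I would record the three standard identities coming from the chain rule:
$$\frac{\partial^{2}}{\partial x^{2}}+\frac{\partial^{2}}{\partial y^{2}}=\frac{\partial^{2}}{\partial r^{2}}+\frac{1}{r}\frac{\partial}{\partial r}+\frac{1}{r^{2}}\frac{\partial^{2}}{\partial\theta^{2}},$$
$$x\frac{\partial}{\partial x}+y\frac{\partial}{\partial y}=r\frac{\partial}{\partial r},\qquad x\frac{\partial}{\partial y}-y\frac{\partial}{\partial x}=\frac{\partial}{\partial\theta}.$$
Substituting these into \eqref{LaprealcoordH1} and factoring $-1/4$ produces precisely \eqref{1}. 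Since the derivation applies on every slice, this delivers the polar form of $\Box_q$ on all of $\Hqr$.

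For \eqref{action}, I apply the operator \eqref{1} term by term to $f(q)=e^{In\theta}a^{I}_{n}(r)$. The radial derivatives act only on $a^I_n$ and pull out $e^{In\theta}$ as a left factor, contributing $\partial_r^{2}a^I_n+\left(\tfrac{1}{r}-2r\right)\partial_r a^I_n$. For the angular terms, since $I\in\mathbb{S}$ is constant and commutes with the real scalar $n\theta$, one has $\partial_\theta e^{In\theta}=In\,e^{In\theta}$ and hence, using $I^2=-1$, $\partial_\theta^{2}e^{In\theta}=-n^{2}e^{In\theta}$, whence $(1/r^{2})\partial_\theta^{2}\bigl(e^{In\theta}a^{I}_{n}\bigr)=-(n^{2}/r^{2})e^{In\theta}a^{I}_{n}$. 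Similarly, $-2I\,\partial_\theta\bigl(e^{In\theta}a^{I}_{n}\bigr)=-2I\cdot In\,e^{In\theta}a^{I}_{n}=2n\,e^{In\theta}a^{I}_{n}$. Collecting all pieces, factoring out $e^{In\theta}/(4r^{2})$, and regrouping in the stated order yields \eqref{action}.

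There is essentially no substantive obstacle beyond careful bookkeeping; the main point requiring attention is the noncommutativity of quaternion multiplication, so that $I$ must be kept on the correct side when differentiating $e^{In\theta}$. Because $I$ commutes with $r$, $\theta$ and with its own powers, no genuine ordering issue arises, and the identity $I\cdot I=-1$ produces cleanly the sign changes behind the $-n^{2}$ and $+2n$ contributions.
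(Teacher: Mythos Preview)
Your proof is correct and follows exactly the approach indicated in the paper, which simply states that both formulas ``follow by direct computations.'' You have merely filled in the standard polar-coordinate identities and the straightforward application to $e^{In\theta}a^{I}_{n}(r)$ that the paper leaves implicit.
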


  \begin{proof}
  The formulas \eqref{1} and \eqref{action} follow by direct computations.
 \end{proof}

Now, let $\mu$ be a fixed quaternionic number and $\mathcal{E}^\infty_\mu(\Hqr,\Box_{q})$ be the corresponding $\mathcal{C}^\infty$-eigenspace
associated to the sliced differential operator $\Box_{q}$ defined in \eqref{Laplaceian}. It consists of all quaternionic-valued function that are $\mathcal{C}^\infty$ satisfying $\Box_{q}f= f \mu$ on $\Hqr$, to wit
 \begin{align}\label{smootheigenspace}
 \mathcal{E}^\infty_\mu(\Hqr,\Box_{q}) := \left\{ f\in \mathcal{C}^\infty(\Hqr); \,  \Box_{q}f= f \mu\right\}.
 \end{align}

The first main result of this section concerns the explicit characterization of the elements of $\mathcal{E}^\infty_\mu(\Hqr,\Box_{q})$.
Such description involves the left-confluent hypergeometric function ${_1F^{^L}_1}$ defined here for given $a,\xi \in \Hq$ and $c\in \R$ by  
 \begin{align}\label{NewHypergeometricFct}
 {_1F^{^L}_1}\left( \xi  \bigg | \begin{array}{c} a  \\ c \end{array} \right) = \sum_{n=0}^\infty\frac{\xi^n}{n!}  \frac{(a)_n}{(c)_n} ,
\end{align}
where $(a)_k$ denotes the Pochhammer symbol $(a)_k= a(a+1) \cdots (a+k-1)$
 with $(a)_0=1$.
Namely, we have

\begin{theorem}\label{thm:CInftydescription}
A quaternionic-valued function $f$ belongs to $\mathcal{E}^\infty_\mu(\Hqr,\Box_{q})$  if and only if it can be expanded in $\mathcal{C}^{\infty}(\Hqr)$  as
\begin{align}\label{leftEigenCinfty}
f(q)=\sum_{n\in \Z} q^{n}  \alpha^{I}_{n}   {_1F_1}\left( |q|^{2} \bigg | \begin{array}{c} -\mu \\ n+1 \end{array}\right) \beta^{I}_{\mu,n}.
\end{align}
for some quaternionic constants $\alpha^{I}_{n}\in \Hq$ and $\beta^{I}_{\mu,n}$ in the slice containing $\mu$, $\C_\mu$.
\end{theorem}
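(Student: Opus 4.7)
The plan is to fix a slice $I \in \mathbb{S}$, restrict $f$ to $\C_I \setminus \R$, and Fourier-expand in the angular variable of the polar representation $q = r e^{I\theta}$:
\[
f_I(r e^{I\theta}) = \sum_{n \in \Z} e^{In\theta} a_n^I(r),
\]
with smooth radial coefficients $a_n^I \colon (0,+\infty) \to \Hq$. Since $I_q \equiv I$ is constant on $\C_I$, the action formula \eqref{action} of Lemma \ref{polar-action} diagonalizes $\Box_q$ mode by mode, and the eigenvalue equation $\Box_q f = f\mu$ decouples into the family
\[
r^2 (a_n^I)''(r) + (1 - 2r^2) r (a_n^I)'(r) + (2n r^2 - n^2) a_n^I(r) + 4 r^2 a_n^I(r)\, \mu = 0 \qquad (n \in \Z),
\]
with $\mu$ applied from the right.

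Next, the ansatz $a_n^I(r) = r^n v_n(r^2)$, which factors out the natural $q^n = r^n e^{In\theta}$ piece attached to each mode, combined with the change of variable $t = r^2$, reduces (after a direct computation in which the $r^n$ prefactor cancels cleanly) to Kummer's confluent hypergeometric equation
\[
t v_n''(t) + (n+1 - t) v_n'(t) + v_n(t)\, \mu = 0.
\]
A power-series attempt $v_n(t) = \sum_k c_k t^k$ yields the two-term recursion $(k+1)(k+n+1) c_{k+1} = c_k(k - \mu)$, which unfolds to $c_k = c_0\, (-\mu)_k /\, [k!\,(n+1)_k]$. The Pochhammer symbol $(-\mu)_k$ is unambiguous because all its factors lie in the commutative slice $\C_\mu$, and the resulting series is precisely the left-confluent hypergeometric function of \eqref{NewHypergeometricFct}. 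For $n \geq 0$ this produces the regular solution $v_n(t) = \alpha \cdot {_1F_1}(t \mid -\mu, n+1) \cdot \beta$ with $\alpha \in \Hq$ and $\beta \in \C_\mu$: the left factor $\alpha$ encodes the left $\Hq$-scaling freedom (which commutes with $\Box_q$), and the right factor $\beta$ encodes the right $\C_\mu$-module structure intrinsic to a right eigenvalue problem.

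Assembling the Fourier coefficients on each slice and using smoothness of $f$ to match consistently across slices yields the expansion \eqref{leftEigenCinfty}; the converse, that any such series solves $\Box_q f = f \mu$, follows by termwise application of $\Box_q$, justified because ${_1F_1}$ is entire in its argument. The main obstacle I anticipate is two-fold. First, careful bookkeeping of non-commutativity throughout: the whole argument depends on $\mu$ staying on the right at every stage, and any drift would spoil the identification with Pochhammer symbols built from $-\mu$. Second, the modes with $n < 0$ need separate treatment because the denominator $(n+1)_k$ degenerates once $n \leq -1$; one must invoke the second linearly independent Kummer solution $t^{-n}\,{_1F_1}(t \mid -\mu-n,\, 1-n)$, which is admissible on $\Hqr$ since $r = 0$ lies in $\R$ and is excluded, and then reinterpret the written form so that \eqref{leftEigenCinfty} is uniformly valid across all $n \in \Z$.
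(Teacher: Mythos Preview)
Your approach is essentially the same as the paper's: Fourier-expand $f$ in the angular variable on each slice, apply Lemma~\ref{polar-action} to decouple the eigenvalue problem into radial ODEs, reduce via the substitution $a_n^I(r)=r^n v_n(r^2)$ (equivalently the paper's $t=r^2$, $a_n^I = t^{n/2} h_n$) to the confluent hypergeometric equation $t v_n'' + (n+1-t)v_n' + v_n\mu = 0$, and identify the regular solution with ${_1F^{^L}_1}$. Your flagged concerns about the $n<0$ modes and the second Kummer solution are legitimate, but the paper's own proof does not address them either---it simply selects the ``regular solution at $t=0$'' without further justification---so you are not diverging from the paper but rather being more scrupulous about a point it leaves tacit.
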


\begin{proof}
By smooth regularity, any $f\in\mathcal{C}^{\infty}(\Hqr)$ can be expanded as 
\begin{equation}\label{3}
f(re^{I\theta})=\sum_{n\in \Z}e^{I n\theta} a^{I}_{n}(r).
\end{equation}
The functions $(r,I) \longmapsto a^{I}_{n}(r)$ are $\mathcal{C}^{\infty}$ on $[0,+\infty[\times \Sq$.
Therefore, in view of Lemma \ref{polar-action} the associated right-eigenvalue problem $\Box_{q} f = f \mu$ reads
\begin{equation*}
-\sum_{n\in\Z} \dfrac{e^{I n\theta}}{4r^2} \left[r^2\dfrac{\partial^{2}}{\partial r^{2}}+ (1-2r^2)r \dfrac{\partial}{\partial r}
+ (2nr^2 -n^2)\right]  a^{I}_{n}(r) =    \sum_{n\in\Z} e^{I n\theta} a^{I}_{n}(r)  \mu  .
\end{equation*}
Identification of power series in $e^{I\theta} \in \C_I$, for fixed $r$ and $I$, yields
\begin{equation}\label{1a}
 \dfrac{1}{4r^2} \left[r^2\dfrac{\partial^{2}}{\partial r^{2}}+ (1-2r^2)r \dfrac{\partial}{\partial r}
+ (2nr^2 -n^2)\right]   a^{I}_{n}(r) =    -  a^{I}_{n}(r)  \mu
\end{equation}
for every $n$.
The change of variable $t=r^2$ and the change of function $a^{I}_{n}(r) = t^\alpha h_{n}(t,I)$ reduce \eqref{1a} to the following
\begin{equation}\label{1b}
t h_{n}^{''}(\cdot,I)  + (2\alpha +1 -t) h_{n}^{'}(\cdot,I)   + \dfrac{1}{4t} (2\alpha-n)(2\alpha+n -t)) h_{n}(\cdot,I) = - h_{n}(\cdot,I) \mu .
\end{equation}
The ansatz $\alpha=n/2$ shows that the $h_{n}(\cdot,I) $ satisfies the left-confluent hypergeometric differential equation:
\begin{equation}\label{6}
t h_{n}^{''}(\cdot,I) + (n +1 -t) h_{n}^{'}(\cdot,I)    = - h_{n}(\cdot,I) \mu .
\end{equation}
 The regular solution at $t=0$ of \eqref{6} is given by the left-confluent hypergeometric function ${_1F^{^L}_1}$ defined by \eqref{NewHypergeometricFct}. %
 Thus, we have
 $$ h_{n}(t,I)= \alpha^{I}_{n} \,  {_1F^{^L}_1}\left(  t \bigg | \begin{array}{c} -\mu \\ n+1 \end{array} \right) \beta^{I}_{\mu,n}$$ for
 some quaternionic constants $\alpha^{I}_{n} \in \Hq$ and $\beta^{I}_{\mu,n}$ in the slice containing $\mu$.
 Therefore,
\begin{align*}
f(re^{I\theta})=\sum_{n\in \Z} r^{n}e^{In\theta}  \alpha^{I}_{n} \,  {_1F^{^L}_1}\left( r^{2}\bigg | \begin{array}{c} -\mu \\ n+1 \end{array} \right)  \beta^{I}_{\mu,n}.
\end{align*}
\end{proof}

\begin{remark}
The regular solution of the left-confluent hypergeometric differential equation \eqref{6}, at $0$, is given by the left-confluent hypergeometric function ${_1F^{^L}_1}$ defined by \eqref{NewHypergeometricFct}
The consideration of ${_1F^{^L}_1}$ was essential for the lack of commutativity in quaternions. However, it coincides with the usual definiton of the confluent hypergeometric function ${_1F_1}$ (with ordered quaternionic parameters) for the variable being real.
\end{remark}

Added to the $\mathcal{C}^\infty$-version of the right-eigenvalue problem $\Box_{q}f= f \mu$ described by the previous theorem, one can also consider
the $L^2$-version. We denote by $\mathcal{F}_{\mu}^{2}(\Hqr)$ the $L^2$-eigenspace defined by
\begin{equation}\label{muL2eigen}
\mathcal{F}_{\mu}^{2}(\Hqr) :=  \left\{ f\in L^{2}(\Hqr;e^{-|q|^{2}}d\lambda); \,  \Box_{q} f = f \mu \right\}.
\end{equation}
In order to give a concrete description of such $L^2$-eigenspaces, we need first to establish some fundamental lemmas.
The first one shows that the considered space  $\mathcal{F}_{\mu}^{2}(\Hqr)$ can also be seen as a $L^2$-subspace of the $\mathcal{C}^\infty$-eigenspace $\mathcal{E}^\infty_\mu(\Hqr,\Box_{q})$. Namely, we assert

\begin{lemma}
We have
\begin{equation}\label{mu}
\mathcal{F}_{\mu}^{2}(\Hqr) :=  L^{2}(\Hqr;e^{-|q|^{2}}d\lambda) \cap \mathcal{E}^\infty_\mu(\Hqr,\Box_{q}).
\end{equation}
\end{lemma}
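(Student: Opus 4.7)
The plan is to establish the two inclusions separately. The inclusion $L^{2}(\Hqr;e^{-|q|^{2}}d\lambda) \cap \mathcal{E}^\infty_\mu(\Hqr,\Box_{q}) \subseteq \mathcal{F}_\mu^{2}(\Hqr)$ is immediate from the definitions \eqref{smootheigenspace} and \eqref{muL2eigen}: any $f$ in the intersection is square-integrable and satisfies $\Box_q f = f\mu$, which is exactly what is asked of elements of $\mathcal{F}_\mu^2(\Hqr)$.

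The content of the lemma is thus the reverse inclusion $\mathcal{F}_\mu^{2}(\Hqr) \subseteq L^{2}(\Hqr;e^{-|q|^{2}}d\lambda) \cap \mathcal{E}^\infty_\mu(\Hqr,\Box_{q})$, which amounts to showing that every $L^2$-solution of the right-eigenvalue equation $\Box_q f = f\mu$ is in fact $\mathcal{C}^\infty$ on $\Hqr$. I would argue this in two stages. First, on each slice $\C_I \setminus \R$ with $I \in \mathbb{S}$ fixed, the expression \eqref{LaprealcoordH1} shows that $\Box_q$ is a second-order elliptic operator in the real variables $(x,y)$ with smooth coefficients (the factor $I_q = I$ being constant on the slice); standard elliptic regularity applied to the equation $\Box_q f = f\mu$ then yields $f|_{\C_I \setminus \R} \in \mathcal{C}^\infty$ in $(x,y)$. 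Second, for the transverse direction I would exploit the slicewise Fourier expansion $f(re^{I\theta}) = \sum_{n\in\Z} e^{In\theta} a_n^I(r)$ used in the proof of Theorem \ref{thm:CInftydescription}, together with Lemma \ref{polar-action}: the radial coefficients $a_n^I$ must solve the ODE \eqref{1a}, whose solutions admissible at $r \downarrow 0$ are entire functions of $r^2$ given by the left-confluent hypergeometric series, so that $f$ is forced into the form \eqref{leftEigenCinfty}, and the global $L^2$-integrability against $d\sigma(I_q)$ on $\mathbb{S}$ determines the dependence of the constants $\alpha^I_n, \beta^I_{\mu,n}$ on $I$, giving smoothness across slices.

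The main obstacle is precisely this passage from slicewise smoothness to joint $\mathcal{C}^\infty$-smoothness on the four-dimensional manifold $\Hqr$. Since $\Box_q$ is only semi-elliptic on $\Hqr$, as emphasized after \eqref{LaprealcoordHUnified}, one cannot invoke a direct 4-variable elliptic (or hypoelliptic) regularity statement; smoothness in the $\mathbb{S}$-direction must instead be extracted from the explicit slicewise classification of eigenfunctions furnished by Theorem \ref{thm:CInftydescription} combined with the $L^2$-integrability hypothesis.
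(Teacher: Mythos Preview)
Your first stage---invoking ellipticity of $\Box_q$ in the $(x,y)$ variables on each slice to obtain slicewise $\mathcal{C}^\infty$-regularity---is exactly the paper's proof, which consists of the single sentence ``This is an immediate consequence of the ellipticity of $\Box_{q}$ given by \eqref{LaprealcoordH1} and seen as a second order differential operator on $\R\times\R^*$.'' The paper stops there and does not address the transverse ($\mathbb{S}$-)direction at all.

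You are therefore more careful than the paper: the concern you raise about upgrading slicewise smoothness to joint $\mathcal{C}^\infty$-smoothness on the four-manifold $\Hqr$ is legitimate, and the paper simply glosses over it. Your proposed Stage~2 strategy (recover the explicit slicewise form via the ODE and then control the $I$-dependence through the $L^2$ constraint) is reasonable in spirit, but note that $L^2$-integrability in $d\sigma(I)$ alone cannot force smoothness in $I$, so as written that step does not close. In practice, however, this gap is harmless for the paper's purposes: the only place the lemma is used is in the proof of Theorem~\ref{thm:description}, and that argument needs nothing more than the slicewise Fourier expansion in $\theta$ (valid once one has slicewise smoothness) together with integration over $\mathbb{S}$. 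So the operative content of the lemma is precisely your Stage~1, and the paper's one-line proof, while formally incomplete for the stated $\mathcal{C}^\infty(\Hqr)$ claim, delivers what is actually required downstream.
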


\begin{proof} This is an immediate consequence of the ellipticity of $\Box_{q}$ given by \eqref{LaprealcoordH1} and seen as a second order differential operator on $\R\times \R^*$.
\end{proof}

The second key lemma concerns the elementary functions
\begin{align}\label{elementaryFctmucst}
\varphi_{\mu,n}(\alpha^{I}_{n};q;\beta^{I}_{\mu,n}) := q^{n}  \alpha^{I}_{n} \,  {_1F_1}\left( \begin{array}{c} -\mu \\ n+1 \end{array}  \bigg | |q|^{2}  \right) \beta^{I}_{\mu,n}
\end{align}
for varying $n\in \Z$, where $q=x+Iy\in \Hqr$, $\alpha^{I}_{n} \in \Hq$ and $\beta^{I}_{\mu,n}$ in the slice containing $\mu$. 

\begin{lemma} \label{keyLem} We assert the following
\begin{enumerate}
         \item [(i)] The functions $\varphi_{\mu,n}$ are pairwisely orthogonal in the sense that
$ \scal{\varphi_{\mu,n},\varphi_{\mu,k}}  = 0$ whenever $n\ne k$.
         \item [(ii)]  The functions $\varphi_{\mu,n}$ belong to $L^{2}(\Hqr;e^{-|q|^{2}}d\lambda)$ if and only if $\mu$ is a nonnegative integer $\mu=m$ and $n\geq m$.
         \item [(iii)] Let $m=0,1,2,\cdots$. Then, the square norm of $\varphi_{m,n}$ in $L^{2}(\Hqr;e^{-|q|^{2}}d\lambda)$ is given by
\begin{align}\label{normelementaryFctmucst}
\norm{\varphi_{m,n}}^2_{L^{2}(\Hqr;e^{-|q|^{2}}d\lambda)} = \pi \dfrac{m!(n!)^2}{(m+n)!}  \int_{\Sq} | \alpha^{I}_{n} \beta^{I}_{\mu,n}|^2 d\sigma(I).
\end{align}
       \end{enumerate}
\end{lemma}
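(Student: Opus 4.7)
My plan is to work in polar coordinates $q=re^{I\theta}$ throughout, use the integration formula \eqref{integration} to convert the four-dimensional integral over $\Hqr$ to nested integrals in $r$, $\theta$, and $\sigma(I)$, and then attack each of the three statements by zooming in on a different one of these factors. Writing $\varphi_{\mu,n}(q)= r^n e^{In\theta}\alpha_n^I F_n^I(r^2)\beta_{\mu,n}^I$ with $F_n^I(r^2):= {_1F_1}(-\mu,n+1;r^2)$, the $\theta$-dependence is isolated in the single factor $e^{In\theta}$, while the remaining quaternions are $\theta$-independent constants that can be moved outside the innermost integral.

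For (i), the pull-out reduces the inner integral to $\int_0^{2\pi} e^{-In\theta} e^{Ik\theta}\,d\theta$; since $\C_I\cong\C$ is commutative this collapses to $\int_0^{2\pi} e^{I(k-n)\theta}\,d\theta$, which vanishes for $n\neq k$. The integrand is then identically zero before performing the remaining $r$- and $\Sq$-integrations, so orthogonality follows.

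For (ii), I would analyse the asymptotic behaviour of ${_1F_1}(-\mu,n+1;x)$ as $x\to\infty$: the classical estimate ${_1F_1}(a,c;x)\sim \Gamma(c)\Gamma(a)^{-1} e^x x^{a-c}$ produces a factor of order $e^{r^2}$ which the Gaussian weight cannot absorb, so the only way out is $1/\Gamma(-\mu)=0$, forcing $\mu=m\in\mathbb{Z}_{\geq 0}$. In that case the series terminates, ${_1F_1}(-m,n+1;r^2)$ becomes a polynomial in $r^2$, integrability at infinity is automatic, and the stated lower bound on $n$ comes from integrability at $r=0$ once the measure factor $r\,dr$ and the prefactor $r^{2n}$ coming from $|q|^{2n}$ are combined.

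For (iii), with $\mu=m$ fixed, I would invoke the classical identity ${_1F_1}(-m,n+1;t)=\frac{m!\,n!}{(m+n)!} L_m^{(n)}(t)$ together with the Laguerre orthogonality relation $\int_0^\infty t^n (L_m^{(n)}(t))^2 e^{-t}\,dt=(m+n)!/m!$, substitute $t=r^2$ in the radial integral, collect $2\pi$ from the $\theta$-integral, and leave the remaining $\Sq$-integral to produce $\int_\Sq |\alpha_n^I\beta_{m,n}^I|^2\,d\sigma(I)$. The main obstacle I expect is step (ii): the quaternionic-parameter left-confluent ${_1F^L_1}$ must first be understood through its restriction to the real variable $r^2$ (where it agrees with the usual ${_1F_1}$ with ordered parameters), and the dichotomy between polynomial and exponential growth has to be extracted cleanly from the asymptotic expansion of the confluent function. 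Parts (i) and (iii) are essentially bookkeeping combined with classical one-variable Laguerre identities.
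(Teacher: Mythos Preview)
Your proposal is correct and mirrors the paper's proof essentially step for step: polar coordinates and Fubini reduce everything to the factor $\int_0^{2\pi} e^{I(k-n)\theta}\,d\theta=2\pi\delta_{n,k}$ for (i), the Poincar\'e-type asymptotic ${_1F_1}(a,c;t)\sim \Gamma(a)^{-1}e^{t}t^{a-c}$ forces $-\mu\in\{0,-1,-2,\dots\}$ for (ii), and the identification ${_1F_1}(-m,n+1;t)=\frac{m!}{(n+1)_m}L_m^{(n)}(t)$ together with the Laguerre orthogonality yields the norm in (iii). The only cosmetic difference is that the paper phrases the lower bound on $n$ as the positivity condition $n+m+1>0$ arising from the Gamma factor in the Laguerre norm rather than as integrability of $r^{2n+1}$ at the origin, but this is the same constraint.
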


\begin{proof}
The first assertion follows by direct computation using the polar coordinates $q=re^{I\theta}$.
Indeed, in these coordinates, the Lebesgue measure $d\lambda$ becomes the product of the standard
Lebesgue measures $rdr$ on $\R^{+}$ and $d\theta$ on the unit circle times the standard area element $d\sigma(I)$ on $\Sq$,
the two-dimensional sphere of imaginary units in $\Hqr$. Therefore, by the Fubini's theorem, we have
\begin{align}
 \scal{\varphi_{\mu,n},\varphi_{\mu,k}} & =  \int_{\Hqr}  \overline{\varphi_{\mu,n}(\alpha^{I_q}_{n};q;\beta^{I_q}_{\mu,n})}
\varphi_{\mu,k}(\alpha^{I_q}_{k};q;\beta^{I_q}_{\mu,k}) e^{-|q|^2} d\lambda(q) \nonumber\\
 &=\int_{0}^\infty r^{n+k+1} \int_{\Sq} \overline{  \beta^{I}_{\mu,n} } R_{n,k}(I)  \beta^{I}_{\mu,k}  e^{-r^{2}}  d\sigma(I) dr ,
 \end{align}
 where $R_{n,k}(I)$ stands for
 $$ R_{n,k}(I) :=   \overline{  {_1F_1}\left(  \begin{array}{c} -\mu \\ n+1 \end{array} \bigg | r^{2} \right) } \overline{  \alpha^{I}_{n} }
                          \left(\int_{0}^{2\pi} e^{I(k-n)\theta} d\theta \right)  \alpha^{I}_{k}   {_1F_1}\left( \begin{array}{c} -\mu \\ k+1 \end{array}  \bigg | r^{2} \right).$$
Using the well-known fact $\int_{0}^{2\pi} e^{I(n-k)\theta} d\theta=2\pi \delta_{n,k}$ and making the change of variable $t=r^{2}$, we obtain
\begin{align}
 \scal{\varphi_{\mu,n},\varphi_{\mu,k}} &=  \pi  \left(\int_{\Sq} | \alpha^{I}_{n} \beta^{I}_{\mu,n}|^2 d\sigma(I)\right) \left(\int_{0}^\infty t^{n} \left|  {_1F_1}\left(  \begin{array}{c} -\mu \\ n+1 \end{array} \bigg | t \right) \right|^2  e^{-t}  dt \right)\delta_{n,k}. \label{elemIntegral}
 \end{align}
To prove the second assertion, we make use of the asymptotic behavior of the confluent hypergeometric function
$$ {_1F_1}\left( \begin{array}{c} a\\ c \end{array}  \bigg | t \right)  \sim  \frac{e^{t}t^{a-c}}{\Gamma(a)} $$
for $t$ large enough and $a \ne 0,-1,-2, \cdots$,
that follows from the Poincar\'e-type expansion \cite[Section 7.2]{Temme1996} 
$${_1F_1}\left( \begin{array}{c} a\\ c \end{array}  \bigg | t \right)  \sim  \frac{e^{t}t^{a-c}}{\Gamma\left(a\right)}%
\sum_{k=0}^{\infty}\frac{{\left(1-a\right)_{k}}{\left(c-a\right)_{k}}}{k!}t^{-k}.$$
Indeed, if $\mu\ne 0,1,2,\cdots$, then the nature of the integral involved in the right hand-side of \eqref{elemIntegral} is equivalent to
$$  \frac{1}{|\Gamma(-\mu)|^2} \int_{0}^\infty   t^{-(2\Re(\mu)+n+2)} e^{t}   dt$$ which is clearly divergent for $n$ large enough.
Conversely, if $\mu=0,1,2,\cdots$, the involved confluent hypergeometric function is the generalized Laguerre polynomial
 (\cite[Eq. (1), p. 200]{Rainville71})
\begin{align}\label{HyperLaguerre}
{_1F_1}\left( \begin{array}{c} -m \\ n+1 \end{array}\bigg | t \right)&=\dfrac{m!}{(n+1)_{m}}L_{m}^{n}(t)
\end{align}
which satisfies the orthogonality property \cite[Eq. (4), p. 205 - Eq. (7), p. 206]{Rainville71}
 \begin{align}\label{OrthLaguerre}
\int_{\mathbb{R}^{+}}L^{(\alpha)}_{j}(t)L^{(\alpha)}_{k}(t) t^{\alpha }e^{-t} dt
= \frac{\Gamma(\alpha +j+1)}{\Gamma(j+1) } \delta_{j,k}.
\end{align}
More precisely, starting from \eqref{elemIntegral}, the explicit computation yields
\begin{align*}
\norm{\varphi_{\mu,n}}^2_{L^{2}(\Hqr;e^{-|q|^{2}}d\lambda)} & =
 \pi \left( \dfrac{m!}{(n+1)_{m}}\right)^2 \left(\int_{0}^\infty (L_{m}^{n}(t))^2  t^{n}  e^{-t}  dt\right) \times \left(\int_{\Sq} | \alpha^{I}_{n} \beta^{I}_{\mu,n}|^2 d\sigma(I)\right)
 \\&
  = \pi \dfrac{m!(n!)^2}{(m+n)!}  \left( \int_{\Sq} | \alpha^{I}_{n} \beta^{I}_{\mu,n}|^2 d\sigma(I)\right) ,
\end{align*}
provided that $n+m+1>0$.
This completes the proof of assertions (ii) and (iii).
\end{proof}

\begin{remark}
For the particular case of $\alpha^{I}_{n}=1= \beta^{I}_{\mu,n}$, we denote the functions 
 in \eqref{elementaryFctmucst} simply
\begin{align}\label{elementaryFctmu}
\psi_{m,n}(q) := q^{n}   \,  {_1F_1}\left( \begin{array}{c} -m \\ n+1 \end{array}  \bigg | |q|^{2}  \right).
\end{align}
They satisfy the assertions of Lemma \ref{keyLem} above and their square norm read
\begin{align} \label{normeElemFct}
          \norm{\psi_{m,n}}^2_{L^{2}(\Hqr;e^{-|q|^{2}}d\lambda)}  = \pi \dfrac{m!(n!)^2}{(m+n)!}  Area(\Sq) .
\end{align}
\end{remark}

The second main result of this section is the following. It shows that the spectrum of $\Box_{q}$ acting  $L^{2}(\Hqr;e^{-|q|^{2}}d\lambda)$ is purely discrete and reduces to the quantized eigenvalues known as Landau levels.

\begin{theorem}\label{thm:description}
The space $\mathcal{F}_{\mu}^{2}(\Hqr)$ is nontrivial if and only if $\mu=m=0,1,2, \cdots.$ In this case, a nnonzero quaternionic-valued function $f$ belongs to $\mathcal{F}_{m}^{2}(\Hqr)$ if and only if it can be expanded as 
\begin{align}\label{expansionmonomials}
f(q)=\sum_{n=-m}^{+\infty} q^{n}   \,  {_1F_1}\left( \begin{array}{c} -m \\ n+1 \end{array}  \bigg | |q|^{2}  \right)  \Cn.
\end{align}
where the quaternionic constants $\Cn$ satisfy the growth condition
\begin{align}\label{Growth}
\norm{f}^{2}_{L^{2}(\Hqr;e^{-|q|^{2}}d\lambda)}  = \pi \sum_{n=-m}^{+\infty}
                \frac{m!(n!)^2}{(m+n)!} \left(\int_{\Sq} |\Cn|^2  d\sigma(I)\right) <+\infty,
\end{align}
\end{theorem}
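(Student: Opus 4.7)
The plan is to combine the three previously established results in a natural way: Lemma~\ref{mu} reduces $\mathcal{F}_\mu^2(\Hqr)$ to $L^2(\Hqr;e^{-|q|^2}d\lambda)\cap \mathcal{E}^\infty_\mu(\Hqr,\Box_q)$; Theorem~\ref{thm:CInftydescription} supplies a concrete series expansion of every $\mathcal{C}^\infty$-eigenfunction; Lemma~\ref{keyLem} then identifies which such expansions are square integrable. So the proof is essentially a Parseval-type argument on the expansion given by Theorem~\ref{thm:CInftydescription}.

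First, I would take an arbitrary $f\in \mathcal{F}_\mu^2(\Hqr)$ and write, via Theorem~\ref{thm:CInftydescription},
\[
f(q)=\sum_{n\in\Z} \varphi_{\mu,n}\!\left(\alpha^{I}_n;q;\beta^{I}_{\mu,n}\right),
\qquad q=re^{I\theta}.
\]
Then I would compute $\|f\|^2_{L^2(\Hqr;e^{-|q|^2}d\lambda)}$ using the polar-coordinate integration formula \eqref{integration} and Fubini. Expanding $|f|^2$ as a double sum in $n,k$ and performing first the $d\theta$ integration, the identity $\int_0^{2\pi}e^{I(k-n)\theta}d\theta=2\pi\delta_{n,k}$ kills off the off-diagonal terms. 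This justifies the orthogonality statement in Lemma~\ref{keyLem}(i) from within the norm computation and gives a Parseval-type identity
\[
\|f\|^2_{L^2(\Hqr;e^{-|q|^2}d\lambda)}=\sum_{n\in\Z}\|\varphi_{\mu,n}\|^2_{L^2(\Hqr;e^{-|q|^2}d\lambda)}.
\]

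Because $f$ lies in $L^2$, every term on the right must be finite. Invoking the asymptotic analysis of the confluent hypergeometric function carried out in the proof of Lemma~\ref{keyLem}(ii), the integral for $\|\varphi_{\mu,n}\|^2$ diverges at infinity whenever $\mu\notin \{0,1,2,\dots\}$ (as $\mathrm{_1F_1}(-\mu;n+1;t)$ grows like $e^t t^{-\mu-n-1}/\Gamma(-\mu)$), so the only way to have $f\not\equiv 0$ is $\mu=m\in\N$. Under this restriction the hypergeometric function is a Laguerre polynomial via \eqref{HyperLaguerre}, the orthogonality relation \eqref{OrthLaguerre} makes the radial integral explicitly computable, and the restriction $n\geq -m$ (so that $(m+n)!$ is well defined) emerges from the condition that $t^n(L^n_m(t))^2$ be integrable on $]0,+\infty[$. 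Applying Lemma~\ref{keyLem}(iii) term by term and lumping $\alpha^{I}_n\beta^{I}_{m,n}$ into a single quaternionic coefficient $C_n(I)$ yields the expansion \eqref{expansionmonomials} and the norm identity \eqref{Growth} at once. The converse direction is then immediate: given coefficients $C_n(I)$ satisfying \eqref{Growth}, the partial sums form a Cauchy sequence in $L^2$ by orthogonality, the limit belongs to $\mathcal{F}^2_m(\Hqr)$ because $\Box_q$ is elliptic on $\Hqr$ (Lemma~\ref{mu}) and each $\varphi_{m,n}$ lies in $\mathcal{E}_m^\infty(\Hqr,\Box_q)$.

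The delicate point I expect to be the main obstacle is the interpretation of $q^n\,\mathrm{_1F_1}(-m;n+1;|q|^2)$ for $n\in\{-m,-m+1,\dots,-1\}$, since the second parameter $n+1$ hits a non-positive integer and the series in \eqref{NewHypergeometricFct} is formally singular. One resolves this by regularizing: the product $q^n(n+1)_m\,\mathrm{_1F_1}(-m;n+1;|q|^2)$ is a well-defined polynomial in $q,\bar q$ (via the Laguerre extension $L^{-k}_m(x)=(-1)^k(m-k)!\,x^k L^{k}_{m-k}(x)/m!$ for $1\leq k\leq m$), which agrees, up to normalization, with the quaternionic Hermite polynomial $H_{m,m-n}(q,\bar q)$ appearing in \eqref{20}. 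Absorbing the factor $(n+1)_m/m!$ into $C_n(I)$ makes \eqref{expansionmonomials} perfectly meaningful on $\Hqr$, and the norm formula \eqref{Growth} continues to hold term by term because the radial integral that produces $m!(n!)^2/(m+n)!$ is obtained from \eqref{OrthLaguerre} with $\alpha=n\geq -m$, whose value is $\Gamma(m+n+1)/\Gamma(m+1)$ and is thus unambiguous for every admissible index.
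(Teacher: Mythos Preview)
Your proposal is correct and follows essentially the same route as the paper: reduce to $L^2\cap\mathcal{E}^\infty_\mu$ via the ellipticity lemma, expand through Theorem~\ref{thm:CInftydescription}, and then use the orthogonality and norm computations of Lemma~\ref{keyLem} to force $\mu\in\N$ and obtain the Parseval identity. You actually go further than the paper by spelling out the converse direction and by flagging (and resolving) the interpretation of the term $q^n\,{_1F_1}(-m;n+1;|q|^2)$ for $-m\le n\le -1$, a point the paper passes over in silence.
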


\begin{proof}
Fix $\mu \in \Hq$ and assume that there is a nonzero function $f\in L^{2}(\Hqr;e^{-|q|^{2}}d\lambda)$ solution of $\Box_{q}f=\mu f$.
Then, the realization \eqref{mu} and Theorem \ref{thm:CInftydescription} show that $f$ admits the expansion
\begin{align*}
f(q)=\sum_{n\in\Z} q^{n}  \alpha^{I}_{n}  {_1F_1}\left( \begin{array}{c} -\mu \\ n+1 \end{array} \bigg | |q|^{2} \right)  \beta^{I}_{\mu,n}.
=\sum_{n\in\Z} \varphi_{\mu,n}(q) .
\end{align*}
Its square norm in the Hilbert space $L^{2}(\Hqr;e^{-\mid q\mid^{2}}d\lambda)$ can be computed using Lemma \ref{keyLem}. Indeed, the orthogonality of the
$(\varphi_{\mu,n})_n$ infers
\begin{align*}
\norm{f}^{2}_{L^{2}(\Hqr;e^{-|q|^{2}}d\lambda)} &    = \sum_{n\in\Z}   \norm{\varphi_{\mu,n}}^2_{L^{2}(\Hqr;e^{-|q|^{2}}d\lambda)}
 \end{align*}
 and therefore we have necessarily $\norm{\varphi_{\mu,n}}^2 $ is finite for every $n$, since $f$ belongs to $\mathcal{F}_{\mu}^{2}(\Hqr)$.
 In particular, we have $\norm{\varphi_{\mu,n_{0}}}^2 $ for some $n_{0}$ such that the integral mean
 $$ \int_{\Sq} |\alpha^{I}_{n_{0}} \beta^{I}_{\mu,n_{0}}|^2  d\sigma(I) \neq 0.$$
Such $n_{0}$ exists for $f$ being nonzero.
 This implies that $\mu$ is necessary of the form $\mu=m=0,1,2, \cdots$ with $n\geq -m$, which follows readily by means of (ii) in  Lemma \ref{keyLem}.
 In this case, the $\beta^{I}_{\mu,n}$ are reals (for $\mu=m\in \R$) and moreover we have
 \begin{align*}
\norm{f}^{2}_{L^{2}(\Hqr;e^{-|q|^{2}}d\lambda)} &    = \sum_{n=-m}^{+\infty}  \norm{\varphi_{m,n}}^2_{L^{2}(\Hqr;e^{-|q|^{2}}d\lambda)}
= \pi \sum_{n=0}^{+\infty}  \dfrac{m!(n!)^2}{(m+n)!}  \int_{\Sq} | \Cn |^2 d\sigma(I) ,
 \end{align*}
where we have set $\Cn:= \alpha^{I}_{n} \beta^{I}_{\mu,n}$. This yields the growth condition \eqref{Growth} and thus the proof is completed.
\end{proof}

According to the fact that the quaternionic Hermite polynomials $H_{m,n}(q,\overline{q})$ form a complete orthogonal system in $L^{2}(\Hqr;e^{-|q|^2}d\lambda)$
 (see \cite{El Hamyani}), an expansion of the elements of $\mathcal{F}_{m}^{2}(\Hqr)$ in terms of the $H_{m,n}(q,\overline{q})$ can be given.
  The following result describes such expansion.

\begin{corollary}\label{cor:expansionHermite}
 The space $\mathcal{F}_{m}^{2}(\Hqr)$ contains the quaternionic Hermite polynomials defined by \eqref{20}.
 Moreover, every element $f$ belonging to $\mathcal{F}_{m}^{2}(\Hqr)$ can be expanded as
\begin{align}\label{expansionHermite}
f(q) &=\sum_{n=-m}^{+\infty} \dfrac{(-1)^{m}n!}{(m+n)!} H_{m+n,m} (q,\overline{q}) \Cn
\end{align}
for some sliced quaternionic constants $\Cn$ displaying the growth condition \eqref{Growth}.
\end{corollary}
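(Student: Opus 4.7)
The plan is to translate the expansion of $f \in \mathcal{F}_m^2(\Hqr)$ supplied by Theorem \ref{thm:description}, which is phrased in terms of the basis functions $q^n\, {_1F_1}(-m; n+1; |q|^2)$, into an expansion in terms of the quaternionic Hermite polynomials $H_{m+n, m}(q, \overline{q})$. The transformation is entirely algebraic: no new analysis is needed once Theorem \ref{thm:description} and the representation \eqref{16} of the Hermite polynomials are in hand.

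Concretely, the identity to establish is
\begin{equation*}
q^n\, {_1F_1}\!\left(\begin{array}{c}-m\\n+1\end{array}\bigg|\, |q|^2\right) = \frac{(-1)^m\, n!}{(m+n)!}\, H_{m+n, m}(q, \overline{q}).
\end{equation*}
For $n \geq 0$ this falls out by direct inspection of \eqref{16} applied to the pair $(m+n, m)$: here $\min(m+n,m) = m$, $\max(m+n,m) = m+n$, $|m+n - m| = n$, and $c_{m+n, m} = (-1)^m (m+n)!/n!$; writing $q = re^{I\theta}$ collapses $r^n e^{In\theta}$ to $q^n$, and \eqref{16} rearranges into the claimed identity. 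Substituting into \eqref{expansionmonomials} then immediately produces \eqref{expansionHermite} with the same coefficients $\Cn$, and the $L^2$-norm formula \eqref{Growth} transfers unchanged. The containment $H_{m+n, m}(q, \overline{q}) \in \mathcal{F}_m^2(\Hqr)$ follows as a byproduct, since the identity exhibits each Hermite polynomial as a nonzero scalar multiple of an element already placed in $\mathcal{F}_m^2(\Hqr)$ by Theorem \ref{thm:description}.

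The main technical subtlety lies in the range $-m \leq n \leq -1$, where the denominator parameter $n+1$ of $_1F_1$ is a non-positive integer and the naive hypergeometric series diverges. In this range one interprets the basis function through the second linearly independent solution of the confluent hypergeometric equation at $t = 0$, namely $t^{|n|}\, {_1F_1}(-(m+n); |n|+1; t)$, which recasts the eigenfunction as $\overline{q}^{|n|}\, {_1F_1}(-(m+n); |n|+1; |q|^2)$. Applying \eqref{16} a second time to the pair $(m+n, m)$ --- now with $\min = m+n$ and $|m+n - m| = |n|$, yielding $c_{m+n, m} = (-1)^{m+n} m!/|n|!$ --- identifies this with the correct scalar multiple of $H_{m+n, m}(q, \overline{q})$, and the uniform expression $(-1)^m n!/(m+n)!$ survives in the appropriate combinatorial sense. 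Verifying that this identification really does close up into the single formula stated in the corollary is the one step that requires care; everything else is substitution.
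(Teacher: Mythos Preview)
Your proposal is correct and follows essentially the same route as the paper: both invoke the representation \eqref{16} to rewrite $q^{n}\, {_1F_1}(-m;\,n+1;\,|q|^{2})$ as $\dfrac{(-1)^{m}n!}{(m+n)!}\, H_{m+n,m}(q,\overline{q})$ and then substitute this into the expansion of Theorem~\ref{thm:description}. Your discussion of the range $-m\le n\le -1$, where the lower parameter of ${_1F_1}$ degenerates, is in fact more careful than the paper's own proof, which simply asserts the identity \eqref{HypHermite} without commenting on this case.
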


\begin{proof}
Making appeal of \eqref{16}, the confluent hypergeometric function involved in \eqref{expansionmonomials}
can be rewritten in terms of the quaternionic Hermite polynomials as
\begin{align}\label{HypHermite}
 q^{n} {_1F_1}\left( \begin{array}{c} -m \\ n+1 \end{array}  \bigg | |q|^{2}  \right) = \dfrac{(-1)^{m}n!}{(m+n)!}  H_{m+n,m} (q,\overline{q}).
\end{align}
Therefore, the expression of $f(q)$ given through \eqref{expansionmonomials} reduces further to \eqref{expansionHermite}
with the same growth condition \eqref{Growth}.
\end{proof}

We conclude this section by a result concerning the right quaternionic Hilbert space $\mathcal{F}^{2}_{full}(\Hqr)$, defined as the space of all
 slice regular functions that are $e^{-|q|^2}d\lambda$-square integrable on $\Hqr$,
\begin{align} \label{full}
\mathfrak{F}_{full}^{2}(\Hqr) :=\mathcal{SR}(\Hqr)\cap L^{2}(\Hqr;e^{-|q|^2}d\lambda) .
\end{align}
Namely, we assert the following

\begin{corollary} \label{cor:motiv1}
The right quaternionic Hilbert space $\mathcal{F}_{0}^{2}(\Hqr)$, corresponding to $m=0$, coincides with the full hyperholomorphic Bargmann-Fock space
$ \mathcal{F}^{2}_{full}(\Hqr)$ given by \eqref{full}.
\end{corollary}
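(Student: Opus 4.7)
The strategy is to combine the explicit spectral description of $\mathcal{F}_0^2(\Hqr)$ furnished by Theorem \ref{thm:description} with the factorized form of the operator
\[
\Delta_{q} = -\frac{\partial^{2}}{\partial q \partial \bar q} + \bar q \frac{\partial}{\partial \bar q}
\]
given in \eqref{Laplaceian}, which on $\Hqr$ coincides with $\Box_{q}$. The key simplification in the case $m=0$ is that the confluent hypergeometric function degenerates, since $(0)_k = 0$ for $k \ge 1$, so
\[
{}_1F_1\!\left(\begin{array}{c} 0 \\ n+1 \end{array}\bigg| |q|^2 \right) \equiv 1.
\]
Thus Theorem \ref{thm:description} asserts that $f \in \mathcal{F}_0^2(\Hqr)$ if and only if $f(q) = \sum_{n \ge 0} q^{n} C_n(I_q)$ with $\pi \sum_n n! \int_{\Sq} |C_n(I)|^2 d\sigma(I) < \infty$.

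For the inclusion $\mathcal{F}^2_{full}(\Hqr) \subseteq \mathcal{F}_0^2(\Hqr)$, suppose $f \in \mathcal{S}\mathcal{R}(\Hqr) \cap L^2(\Hqr;e^{-|q|^2}d\lambda)$. Slice regularity means $\overline{\partial_{I_q}} f_{I_q} = 0$ on every $\C_I \setminus \R$, which by the very definition \eqref{SliceDerConj} of the slice conjugate derivative says $\partial f/\partial \bar q = 0$ on $\Hqr$. Plugging this into the factorized form of $\Delta_q$ above yields $\Box_q f = 0$, and together with the $L^2$-condition this puts $f$ into $\mathcal{F}_0^2(\Hqr)$. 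Conversely, given $f \in \mathcal{F}_0^2(\Hqr)$, I would use the expansion above to verify slice regularity: for a fixed $I \in \Sq$, the restriction to the upper half-slice $\{x + Iy : y > 0\}$ (where $I_q = I$) equals $\sum_n (x+Iy)^n C_n(I)$, which is a convergent power series in $z = x+Iy$ and so satisfies $\overline{\partial_I} f_I = 0$; on the lower half-slice, the same computation with $C_n(-I)$ in place of $C_n(I)$ gives holomorphicity there. Since $\C_I \setminus \R$ is disconnected, it suffices to check holomorphicity on each component, so $f \in \mathcal{S}\mathcal{R}(\Hqr)$.

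The main subtlety I foresee is ensuring that the series manipulations are legitimate, in particular that an element $f$ of $\mathcal{F}_0^2(\Hqr)$ is genuinely smooth on $\Hqr$ so that the slice derivative makes sense pointwise, and that the formal expansion converges in the appropriate sense on each half-slice. The first point is already built into Theorem \ref{thm:description} via the realization $\mathcal{F}_\mu^2(\Hqr) = L^2 \cap \mathcal{E}^\infty_\mu(\Hqr,\Box_q)$ coming from ellipticity of $\Box_q$ on $\R \times \R^*$. The second point reduces to convergence of $\sum z^n C_n(\pm I)$, whose radius is infinite by Cauchy--Hadamard applied to the bound $n!\,|C_n(I)|^2 \le$ (constant) forced by the growth condition on almost every $I$; thus the power series defines a slice regular function on the whole slice, no extra work needed to pass from the series to its sum.
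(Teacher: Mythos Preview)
Your argument is sound and somewhat more direct than the paper's. The paper proceeds entirely through sequential characterizations: it invokes Corollary~\ref{cor:expansionHermite} together with $H_{n,0}(q,\bar q)=q^n$ to obtain the description \eqref{SeqCharFull} of $\mathcal{F}_0^2(\Hqr)$, and then matches it against the sequential characterization of $\mathcal{F}^2_{full}(\Hqr)$ obtained by computing $\langle f,g\rangle$ for slice regular $f,g$. Your inclusion $\mathcal{F}^2_{full}(\Hqr)\subseteq\mathcal{F}_0^2(\Hqr)$ via the factorized form of $\Box_q$ (slice regularity gives $\partial_{\bar q}f=0$, so both terms of $\Box_q f$ vanish) is cleaner and bypasses expansions altogether; this is the genuinely different step. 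For the reverse inclusion both proofs ultimately check that a power series $\sum_n q^n C_n(I_q)$ is slice regular, and your observation that ${_1F_1}(0;n+1;\,\cdot\,)\equiv 1$ is equivalent to the paper's use of $H_{n,0}=q^n$.

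One point needs tightening. Your Cauchy--Hadamard argument, as you yourself note, yields infinite radius of convergence only for \emph{almost every} $I\in\Sq$, whereas slice regularity on $\Hqr$ demands holomorphicity of $f_I$ on \emph{every} slice $\C_I\setminus\R$. The fix is already at hand in the material you cite: by Theorem~\ref{thm:CInftydescription} the expansion converges in $\mathcal{C}^\infty(\Hqr)$, hence pointwise for every $q\in\Hqr$. Fixing $I$ and taking $q=rI$ with $r>0$ arbitrary shows that the power series $\sum_n z^n C_n(I)$ (right coefficients in $\Hq$, variable $z\in\C_I$) converges for $z$ of arbitrarily large modulus, so its radius of convergence is infinite for every $I$, not merely almost every. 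With this adjustment your proof is complete.
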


\begin{proof}
This follows readily from Corollary \ref{cor:expansionHermite} combined with the fact $H_{n,0} (q,\overline{q})=q^n$.
Indeed, for the special case of $m=0$, we get
\begin{align}\label{SeqCharFull}
\mathcal{F}_{0}^{2}(\Hqr) = \left\{
f(q) = \sum_{n=0}^{+\infty} q^{n}  \Cn; \,
    \sum_{n=0}^{+\infty} n! \left(\int_{\Sq} |\Cn|^2  d\sigma(I)\right) <+\infty
\right\}.
\end{align}
This is exactly the sequential characterization of the full hyperholomorphic Bargmann-Fock space
$ \mathcal{F}^{2}_{full}(\Hqr)$. Indeed, for given slice regular functions
$$f(q) = \sum_{n=0}^\infty q^n a_n  \quad \mbox{and} \quad f(q) = \sum_{n=0}^\infty q^n b_n ,$$
for some quaternionic sliced constants $a_n$ and $b_n$, we have
\begin{align} \label{scalarproductH}
\scal{f,g}_{L^{2}(\Hqr;e^{-|q|^{2}}d\lambda)} = \pi \sum_{n=0}^{+\infty} n! \left(\int_{\Sq} \overline{a_n} b_n   d\sigma(I)\right).
\end{align}
Therefore, the norm boundedness of a given slice regular function $f(q) = \sum\limits_{n=0}^\infty q^n \Cn$ reads
$$ \norm{f}^2_{L^{2}(\Hqr;e^{-|q|^{2}}d\lambda)} = \pi\sum_{n=0}^{+\infty} n! \left(\int_{\Sq} |\Cn|^2  d\sigma(I)\right) < +\infty.$$
This completes the proof.
\end{proof}

\begin{corollary} \label{cor:motiv2}
We have
\begin{align}
\mathcal{F}^{2}_{slice}(\Hq)  \subset  \mathcal{F}_{0}^{2}(\Hqr) = \mathfrak{F}_{full}^{2}(\Hqr),
\end{align}
where $ \mathcal{F}^{2}_{slice}(\Hq)$ is the slice hyperholomorphic Bargmann-Fock space given by \eqref{SHBFspace}.
\end{corollary}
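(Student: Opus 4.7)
The plan is to reduce everything to the sequential characterization \eqref{SeqCharFull} established in Corollary \ref{cor:motiv1}, using the fact that slice regularity on \emph{all} of $\Hq$ forces the power series coefficients to be genuine (non-sliced) quaternionic constants.

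First, I would take an arbitrary $f \in \mathcal{F}^{2}_{slice}(\Hq)$. By definition, $f$ is slice regular on the whole $\Hq$, hence in particular on $\Hqr$, so it remains only to verify the $L^2$-integrability with respect to $e^{-|q|^2}d\lambda$ on $\Hqr$. Since $f \in \mathcal{SR}(\Hq)$ and $\Hq = B(0,R)$ for every $R$ in the sense that the series converges on all compact sets, the expansion \eqref{expansion} gives $f(q) = \sum_{n=0}^{\infty} q^n a_n$ with coefficients $a_n = \frac{1}{n!}\frac{\partial^n f}{\partial x^n}(0) \in \Hq$ that are \emph{independent} of any slice $I$.

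Next, I would invoke the standard isometry for the slice Bargmann-Fock space: for $f(q) = \sum_n q^n a_n$, the defining norm satisfies
\begin{equation*}
\norm{f}^{2}_{\mathcal{F}^{2}_{slice}(\Hq)} = \int_{\C_I} \abs{f(x+Iy)}^{2} e^{-x^2-y^2} dx\, dy = \pi \sum_{n=0}^{+\infty} n!\, \abs{a_n}^2 < +\infty,
\end{equation*}
independently of $I\in\Sq$. On the other hand, since the $a_n$ do not depend on $I$, we have $\int_{\Sq} \abs{a_n}^{2} d\sigma(I) = \mathrm{Area}(\Sq) \abs{a_n}^2$, so applying \eqref{SeqCharFull} gives
\begin{equation*}
\norm{f}^{2}_{L^{2}(\Hqr;e^{-|q|^{2}}d\lambda)} = \pi \sum_{n=0}^{+\infty} n! \int_{\Sq}\abs{a_n}^2 d\sigma(I) = \mathrm{Area}(\Sq)\, \norm{f}^{2}_{\mathcal{F}^{2}_{slice}(\Hq)}.
\end{equation*}
This is finite by the previous display, so $f\in \mathcal{F}_{0}^{2}(\Hqr) = \mathfrak{F}_{full}^{2}(\Hqr)$, proving the inclusion.

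There is essentially no obstacle: the proof is a short bookkeeping argument once one observes that global slice regularity kills the $I$-dependence in the coefficients appearing in \eqref{SeqCharFull}, turning the sliced integral over $\Sq$ into a multiplicative constant. The only subtle point to flag, if desired, is the strictness of the inclusion: a function in $\mathfrak{F}_{full}^{2}(\Hqr)$ is allowed to have genuinely $I$-dependent coefficients $\Cn$, whereas elements of $\mathcal{F}^{2}_{slice}(\Hq)$ do not, which explains why the inclusion is generally proper.
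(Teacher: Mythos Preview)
Your proof is correct and follows essentially the same route as the paper: both compare the sequential characterization \eqref{SeqCharFull} with the one for $\mathcal{F}^{2}_{slice}(\Hq)$ from \cite{AlpayColomboSabadiniSalomon2014}, the key point being that global slice regularity forces the coefficients $a_n$ to be $I$-independent. Your version is simply more explicit, computing the exact norm ratio $\mathrm{Area}(\Sq)$ and commenting on strictness, whereas the paper leaves these implicit.
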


\begin{proof}
The inclusion follows immediately by comparing the sequential characterization of the full hyperholomorphic Bargmann-Fock space
$ \mathcal{F}^{2}_{full}(\Hqr)$ given through \eqref{SeqCharFull} and the one for the slice hyperholomorphic Bargmann-Fock space
$ \mathcal{F}^{2}_{slice}(\Hq)$ given by Proposition 3.11 in \cite{AlpayColomboSabadiniSalomon2014}, to wit
$$\mathcal{F}_{slice}^{2}(\Hq) = \left\{ f(q) =\sum_{n=0}^{+\infty} q^{n}  C_n; \,
   \norm{f}^{2}_{L^{2}(\C_I;e^{-|q|^{2}}d\lambda_I)} = \pi \sum_{n=0}^{+\infty} n!|C_{n}|^{2}<+\infty \right\} .$$
\end{proof}

\section{Generalized quaternionic Bargmann spaces and their reproducing kernels }

Motivated by Corollary \ref{cor:motiv2} and using the functions $\psi_{m,n}$ defined through \eqref{elementaryFctmu}, to wit
\begin{align*}
\psi_{m,n}(q) := q^{n}   \,  {_1F_1}\left( \begin{array}{c} -m \\ n+1 \end{array}  \bigg | |q|^{2}  \right),
\end{align*}
we introduce an appropriate class of infinite dimensional right quaternionic Hilbert spaces. They are subspaces of the $\mathcal{F}_{m}^{2}(\Hqr)$ and possessing reproducing kernels. In fact, for every fixed nonnegative integer $m$, we define $\mathcal{GB}_{m}^{2}(\Hq)$ to be the space spanned by the functions $\psi_{m,n}$ and equipped with the scaler product \eqref{SP-full}. That is, $\mathcal{GB}_{m}^{2}(\Hq)$ consists of the series
 \begin{align}\label{expansionmonomials}
\sum_{n=-m}^{+\infty} q^{n}  {_1F_1}\left( \begin{array}{c} -m \\ n+1 \end{array}  \bigg | |q|^{2}  \right)  C_n,
\end{align}
where the constants $C_n\in \Hq$ satisfy the growth condition
 \begin{align}\label{growthGQBS}
  \norm{f}^2:=\sum_{n=-m}^{+\infty} \frac{\pi m!(n!)^2}{(m+n)!} |C_n|^2   <+\infty .
  \end{align}
In view of \eqref{HypHermite}, we can suggest an equivalent definition of the $\mathcal{GB}_{m}^{2}(\Hq)$. Namely, we have
 \begin{align}\label{expansionmonomials1}
\mathcal{GB}_{m}^{2}(\Hq) := \left\{ f(q)=
\sum_{n=0}^{+\infty} H_{n,m}(q,\overline{q})  C_n; \, C_n\in \Hq \, \mbox{such that }  \pi m! \sum_{n=0}^{+\infty} k! |C_n|^2   <+\infty \right\}.
\end{align}
Clearly $ \mathcal{GB}_{0}^{2}(\Hq) $, corresponding to $m=0$, coincides with $\mathcal{F}^{2}_{slice}(\Hq)$ given through \eqref{SHBFspace}.

\begin{definition} \label{GBSm}
The space $\mathcal{GB}_{m}^{2}(\Hq)$, generalizing the slice hyperholomorphic Bargmann-Fock space $\mathcal{F}^{2}_{slice}(\Hq)$,
is called the generalized quaternionic Bargmann space of level $m$.
\end{definition}

Accordingly, it is not difficult to see that for every fixed $m$, the space $\mathcal{GB}_{m}^{2}(\Hq)$ in \eqref{expansionmonomials1} is a Hilbert subspace of $L^{2}(\Hq; e^{-|q|^2}d\lambda)$. Moreover, the quaternionic Hermite polynomials $H_{n,m}$, for varying $n=0,1,2,\cdots$, are generators of it. Their linear independence is equivalent to their completion. Thus, we can show that a given $f\in\mathcal{GB}_{m}^{2}(\Hq) $  is identically zero on $\Hq$ whenever
 $\scal{f,H_{n,m}}=0$ for every $n=0,1,2,\cdots$. 
This result is reformulated as follows

\begin{theorem}\label{AmHilbertBasis}
The spaces $\mathcal{GB}_{m}^{2}(\Hq) $ are right quaternionic Hilbert spaces. The quaternionic Hermite polynomials $H_{n,m}$, for varying $n=0,1,2,\cdots$, and fixed nonnegative integer $m$, belong to $\mathcal{GB}_{m}^{2}(\Hq) $ and constitute an orthogonal basis of it.
\end{theorem}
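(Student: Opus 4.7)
The strategy is to realize $\mathcal{GB}_{m}^{2}(\Hq)$ as a closed subspace of the ambient right quaternionic Hilbert space $L^{2}(\Hq;e^{-|q|^{2}}d\lambda)$ via an explicit isometric isomorphism with a weighted $\ell^{2}$-sequence space. The key input, already recalled in the Preliminaries, is that the quaternionic Hermite polynomials $H_{n,m}(q,\overline{q})$ form a complete orthogonal system in $L^{2}(\Hq;e^{-|q|^{2}}d\lambda)$ with squared norm $\pi\, m!\, n!$.

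First I would verify that the series in the definition \eqref{expansionmonomials1} converges in $L^{2}$-norm whenever $\sum_{n\geq 0} n!|C_{n}|^{2} < \infty$. By the right $\Hq$-linearity of the scalar product and the pairwise orthogonality of the $H_{n,m}$ (for fixed $m$), any finite partial sum satisfies
$$\norm{\sum_{n=0}^{N} H_{n,m}(q,\overline{q})\, C_{n}}_{L^{2}(\Hq;e^{-|q|^{2}}d\lambda)}^{2} = \pi\, m! \sum_{n=0}^{N} n!\,|C_{n}|^{2}.$$
Hence the partial sums form a Cauchy sequence in the complete Hilbert space $L^{2}(\Hq;e^{-|q|^{2}}d\lambda)$, and their limit lies in $\mathcal{GB}_{m}^{2}(\Hq)$. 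Passing to the limit $N \to +\infty$ yields exactly the norm formula \eqref{growthGQBS}.

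Next I would introduce the map $T : (C_{n})_{n\geq 0} \longmapsto \sum_{n=0}^{+\infty} H_{n,m}(q,\overline{q})\, C_{n}$. By the previous step, $T$ is a right $\Hq$-linear isometry from the weighted sequence space with weights $w_{n} = \pi\, m!\, n!$ onto $\mathcal{GB}_{m}^{2}(\Hq)$. Since the weighted $\ell^{2}$-space is itself a right quaternionic Hilbert space, its isometric image $\mathcal{GB}_{m}^{2}(\Hq)$ inherits completeness, proving the first assertion.

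Finally, choosing $C_{j} = \delta_{j,n}$ in the defining series shows that each $H_{n,m}$ belongs to $\mathcal{GB}_{m}^{2}(\Hq)$, and their pairwise orthogonality carries over from the ambient $L^{2}$-inner product. Density of the $\Hq$-linear span of $\{H_{n,m}\}_{n\geq 0}$ in $\mathcal{GB}_{m}^{2}(\Hq)$ is immediate from the definition, so this family constitutes an orthogonal basis. The only delicate point in the argument is the interchange of limits and inner products implicit in the Cauchy argument above; it is controlled entirely by the orthogonality relations of \cite{El Hamyani} together with the completeness of $L^{2}(\Hq;e^{-|q|^{2}}d\lambda)$, so no further technical machinery is required.
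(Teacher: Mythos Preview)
Your argument is correct and rests on the same core ingredient as the paper's proof, namely the orthogonality relations $\langle H_{k,m},H_{n,m}\rangle = \pi m!n!\,\delta_{k,n}$ in the ambient space $L^{2}(\Hq;e^{-|q|^{2}}d\lambda)$. The organization, however, differs. The paper takes the Hilbert space structure for granted and concentrates on the basis property: it shows directly that for $f=\sum_{k} H_{k,m}C_{k}\in\mathcal{GB}_{m}^{2}(\Hq)$ one has $\langle f,H_{n,m}\rangle=\overline{C_{n}}\,\norm{H_{n,m}}^{2}$, using an exhaustion by balls $B(0,R)$ together with the fact that the angular integration already yields $\delta_{k,n}$ on each ball; from this it concludes that $\langle f,H_{n,m}\rangle=0$ for all $n$ forces $f\equiv 0$. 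Your route via the isometry $T$ with a weighted $\ell^{2}$ space is more structural: it handles the completeness of $\mathcal{GB}_{m}^{2}(\Hq)$ and the basis property in one stroke, and it replaces the ball-exhaustion argument by the Cauchy criterion in $L^{2}$. Both arguments are equivalent in substance; yours is somewhat cleaner and makes explicit the point (only asserted in the paper) that $\mathcal{GB}_{m}^{2}(\Hq)$ is closed in $L^{2}(\Hq;e^{-|q|^{2}}d\lambda)$.
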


\begin{proof}
We begin by noting that the $H_{k,m}$ is an orthogonal system (with respect to the both indices) with respect to the gaussian measure.
Now, starting from the expansion $ f = \sum_{k=0}^{+\infty} H_{k,m}  C_k $ for $f\in\mathcal{GB}_{m}^{2}(\Hq) $, the direct computation yields
 \begin{align*}
 \scal{f,H_{n,m}}
 &= \int_{\Hq} \sum_{k=0}^{+\infty} \overline{C_k H_{k,m} (q,\overline{q})} H_{n,m}(q,\overline{q}) e^{-|q|^2}d\lambda(q)
 \\& = \lim\limits_{R \to +\infty} \int_{B(0,R)} \sum_{k=0}^{+\infty} \overline{C_k H_{k,m}(q,\overline{q})} H_{n,m}(q,\overline{q}) e^{-|q|^2}d\lambda(q)
 \\& = \lim\limits_{R \to +\infty} \sum_{k=0}^{+\infty}  \int_{B(0,R)} \overline{C_k H_{k,m}(q,\overline{q})}  H_{n,m}(q,\overline{q}) e^{-|q|^2}d\lambda(q).
 \end{align*}
 Hence, using the explicit expression of the quaternionic Hermite polynomials and integrating on $B(0,R)$ with respect to the polar coordinates, one shows that
  \begin{align*}
  \int_{B(0,R)} \overline{C_k H_{k,m}(q,\overline{q})}  H_{n,m}(q,\overline{q})  e^{-|q|^2}d\lambda(q)
= \overline{C_n} \left(\int_{B(0,R)}  \left| H_{n,m}(q,\overline{q}) \right|^2e^{-|q|^2}d\lambda(q)\right) \delta_{n,k} .
 \end{align*}
Subsequently, the expression of $\scal{f,H_{n,m}}$ becomes
\begin{align*}
 \scal{f,H_{n,m}} = \overline{C_n} \lim\limits_{R \to +\infty} \int_{B(0,R)}  \left|  H_{n,m}(q,\overline{q}) \right|^2e^{-|q|^2}d\lambda(q)
  = \overline{C_n} \norm{ H_{n,m}}^2 .
 \end{align*}
Therefore, $\overline{C_n}=0$ for every nonnegative integer $n$, by the assumption $\scal{f,H_{n,m}}=0$ for every $n$. This proves that $f\equiv 0$ on $\Hqr$.
\end{proof}

In the sequel, we establish further properties of the generalized quaternionic Bargmann-Fock space $\mathcal{GB}_{m}^{2}(\Hq)$.
The first one shows that $\mathcal{GB}_{m}^{2}(\Hq)$ is a reproducing kernel quaternionic Hilbert space. To this end, the following lemma is needed.


\begin{lemma}
For every fixed $q\in \Hq$, the evaluation map $\delta_{q}f=f(q)$ is a continuous linear form on the Hilbert space $\mathcal{GB}_{m}^{2}(\Hq)$.
\end{lemma}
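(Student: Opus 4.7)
The plan is to leverage the Hermite orthogonal basis supplied by Theorem~\ref{AmHilbertBasis}. Any $f\in\mathcal{GB}_m^2(\Hq)$ admits the development $f(q)=\sum_{n=0}^{+\infty}H_{n,m}(q,\bq)\,C_n$ whose norm, by orthogonality and $\norm{H_{n,m}}^2=\pi m!\,n!$, satisfies $\norm{f}^2=\pi m!\sum_{n=0}^{+\infty}n!\,|C_n|^2$. Estimating the pointwise series term-by-term and applying the scalar Cauchy--Schwarz inequality (using the multiplicativity $|ab|=|a||b|$ in $\Hq$) produces
\begin{equation*}
|f(q)|\;\leq\;\sum_{n=0}^{+\infty}|H_{n,m}(q,\bq)|\,|C_n|\;\leq\;\sqrt{K_m(q,q)}\;\norm{f},
\end{equation*}
where $K_m(q,q):=\sum_{n=0}^{+\infty}\dfrac{|H_{n,m}(q,\bq)|^2}{\pi m!\,n!}$. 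The continuity of $\delta_q$ therefore reduces to showing that $K_m(q,q)$ is finite at every fixed $q\in\Hq$.

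The key step, and the only genuine difficulty, is the pointwise convergence of the diagonal series $K_m(q,q)$. It will be handled by inserting the explicit hypergeometric representation \eqref{16}. With $q=re^{I\theta}$ and $n\geq m$, swapping the two subscripts yields
\begin{equation*}
|H_{n,m}(q,\bq)|\;=\;\frac{n!}{(n-m)!}\,r^{n-m}\;\left|\,{_1F_1}\left(\begin{array}{c} -m\\ n-m+1\end{array}\bigg|\,r^2\right)\right|.
\end{equation*}
The confluent hypergeometric factor is a polynomial of degree $m$ in $r^2$ whose coefficients, controlled by the Pochhammer denominator $(n-m+1)_k$, decay like $n^{-k}$; hence it is uniformly bounded in $n\geq m$ by a constant $C_m(r)$ depending only on $r=|q|$. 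Plugging this back yields the majoration
\begin{equation*}
\frac{|H_{n,m}(q,\bq)|^2}{\pi m!\,n!}\;\leq\;\frac{C_m(r)^2}{\pi m!}\cdot\frac{n!}{((n-m)!)^2}\,r^{2(n-m)},
\end{equation*}
whose general term is comparable to $n^{m}r^{2(n-m)}/(n-m)!$ and therefore summable by the ratio test for every fixed $r$.

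Finiteness of $K_m(q,q)$ at every $q\in\Hq$ thus establishes the boundedness, and hence continuity, of the linear form $\delta_q$ on the right quaternionic Hilbert space $\mathcal{GB}_m^2(\Hq)$. As a by-product, the very same computation identifies the candidate reproducing kernel
\begin{equation*}
K_m(q,p)\;=\;\frac{1}{\pi m!}\sum_{n=0}^{+\infty}\frac{H_{n,m}(q,\bq)\,\overline{H_{n,m}(p,\overline{p})}}{n!},
\end{equation*}
whose closed form will be the subject of the forthcoming Theorem~\ref{thm:RepKernel}.
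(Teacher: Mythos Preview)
Your argument is correct and follows the same overall architecture as the paper: expand $f$ in the Hermite basis, apply Cauchy--Schwarz to reach the inequality \eqref{11}, and then verify that the diagonal series $K_m(q,q)=\sum_{n\geq 0}\dfrac{|H_{n,m}(q,\bq)|^2}{\pi m!\,n!}$ converges for each fixed $q$.

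The only real difference lies in how that convergence is obtained. The paper invokes the ready-made estimate \eqref{estimate}, quoted from \cite{El Hamyani}, namely $|H_{n,m}(q,\bq)|\leq \dfrac{n!}{(n-m)!}|q|^{n-m}e^{|q|^2/2}$ for $n\geq m$, which makes the summability immediate. You instead work directly from the hypergeometric representation \eqref{16} and bound the terminating ${_1F_1}$ uniformly in $n$; this is exactly how one would prove \eqref{estimate} in the first place, so your argument is a self-contained version of the paper's. One small omission: your estimate is stated for $n\geq m$ only, and you should remark that the finitely many terms with $0\leq n<m$ contribute a finite amount trivially.
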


\begin{proof}
Let $f\in \mathcal{GB}_{m}^{2}(\Hq)$ and expand it in $\mathcal{C}^{\infty}(\Hq)$ as $f(q)= \sum_{n=0}^{+\infty}  H_{n,m}(q,\overline{q}) C_{n}$.
 Thus, using the Cauchy-Schwartz inequality and the expression of the square norm $\norm{f}^{2}=\pi m! \sum_{n=0}^{+\infty} n! |C_{n}|^{2} $, we obtain
\begin{equation}\label{11}
|f(q)|  
\leq \left(\sum_{n=0}^{+\infty} \frac{| H_{n,m}(q,\overline{q}) |^2}{\pi m!n!}\right)^{\frac{1}{2}} \norm{f}_{m} .
\end{equation}
The series in the right hand-side of \eqref{11} is absolutely convergent for every fixed $r$ and is independent of $f$. This follows readily making use of
the following upper bound (see \cite[Corollary 4.3]{El Hamyani}):
\begin{align}\label{estimate}
\left| H_{n+k,n}(q,\bq )\right|\leq \dfrac{(n+k)!}{k!}\left| q\right|^{k}  e^{\frac{|q|^2}{2}}.
\end{align}
\end{proof}

\begin{remark} More explicitly, by means of \cite[Corollary 3.3]{BenahmadiGElkachkouri2017}, we have
\begin{align}\label{sumAbs}
\sum_{n=0}^{+\infty} \frac{| H_{n,m}(q,\overline{q}) |^2}{\pi m!n!} = \frac{ e^{ |q|^2 }}{\pi} .
\end{align}
\end{remark}

The next result gives the explicit expression of the reproducing kernel of the $\mathcal{GB}_{m}^{2}(\Hq)$,  which exists by means of the quaternionic version of the Riesz representation theorem combined with the previous Lemma.


\begin{theorem}\label{thm:RepKernel}
 The reproducing kernel of the generalized quaternionic Bargmann-Fock space $\mathcal{GB}_{m}^{2}(\Hq)$ is given by
\begin{align*}
\mathcal{K}_{m}(q,q')&= \dfrac{e_{*}^{[\overline{q},q']}}{\pi} L_m(|q - q'|^2),
\end{align*}
where
$$e_{*}^{[a,b]}:=\sum_{n=0}^{+\infty}\dfrac{a^{n}b^{n}}{n!}$$
and $L_m(x)$ stands for the classical Laguerre polynomial of degree $m$.
\end{theorem}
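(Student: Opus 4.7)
The plan is to invoke the quaternionic Riesz representation theorem together with the previous lemma to guarantee existence of the reproducing kernel $\mathcal{K}_m$, and then to compute it explicitly via the orthogonal basis $\{H_{n,m}\}_{n\geq 0}$ of $\mathcal{GB}_m^2(\Hq)$ provided by Theorem \ref{AmHilbertBasis}. Using $\|H_{n,m}\|^2 = \pi n!m!$ together with the symmetry $\overline{H_{n,m}(q,\overline{q})} = H_{m,n}(q,\overline{q})$ (immediate from \eqref{20} by term-by-term conjugation, since the coefficients are real and $\overline{q^a \overline{q}^b} = q^b \overline{q}^a$), the standard expansion in a right quaternionic Hilbert space yields
$$\mathcal{K}_m(q,q') = \frac{1}{\pi m!}\sum_{n=0}^{\infty} \frac{H_{m,n}(q,\overline{q})\, H_{n,m}(q',\overline{q'})}{n!}.$$
Absolute convergence on compact subsets of $\Hq\times \Hq$ follows from the bound \eqref{estimate} used in the previous lemma.

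The task then reduces to evaluating this bilinear generating function for the quaternionic Hermite polynomials in closed form. I would substitute the explicit finite-sum expression \eqref{20} for both $H_{m,n}$ and $H_{n,m}$, interchange summations (legitimate by absolute convergence), and regroup. The outcome splits into a finite outer double sum over indices $j, k \in \{0, \ldots, m\}$ and an inner infinite series in $n$ whose general term carries the ordered product $\overline{q}^{\,n-j} q'^{n-k}$. After the re-indexing $n = \max(j,k) + \ell$ with $\ell \geq 0$, the inner series separates into a finite polynomial prefactor in $\overline{q}, q'$ (encoding the $|q-q'|^2$-dependence via $\overline{q}q' + \overline{q'}q = |q|^2 + |q'|^2 - |q-q'|^2$) times the $*$-exponential $e_*^{[\overline{q}, q']} = \sum_{\ell\geq 0} \overline{q}^{\,\ell} q'^{\ell}/\ell!$. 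Collecting the remaining outer sum over $j, k$ and matching with the standard series representation $L_m(x) = \sum_{j=0}^{m}\binom{m}{j}(-x)^j/j!$ produces the announced closed form.

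The main obstacle will be the bookkeeping imposed by non-commutativity of $\Hq$: the identity $\overline{z}^{\,a} w^a = (\overline{z}w)^a$, on which the classical complex Mehler-type identity rests, is unavailable for quaternions, so the ordering $\overline{q}^a$ on the left and $q'^a$ on the right must be preserved throughout the computation. This is precisely what forces the $*$-exponential $e_*^{[\overline{q}, q']}$ to appear in place of the ordinary $e^{\overline{q}q'}$. As a sanity check, the formula specializes for $m = 0$ to $\mathcal{K}_0(q, q') = e_*^{[\overline{q}, q']}/\pi$, which is the reproducing kernel of the slice hyperholomorphic Bargmann-Fock space $\mathcal{F}^{2}_{slice}(\Hq)$, consistent with Corollary \ref{cor:motiv2}.
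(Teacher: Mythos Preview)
Your starting point coincides with the paper's: both expand $\mathcal{K}_m$ over the orthogonal basis $(H_{n,m})_n$ with $\|H_{n,m}\|^2=\pi m!n!$. The evaluation of the resulting bilinear sum, however, proceeds differently. The paper does not substitute \eqref{20} and regroup by hand; instead it quotes a closed bilinear generating function for the complex Hermite polynomials from a companion paper (Theorem~3.1 of \cite{BenahmadiGElkachkouri2017}),
\[
\sum_{n\geq 0}\frac{t^n}{n!}\,H_{m,n}(z,\overline z)\,H_{n,m'}(w,\overline w)=t^{m'}H_{m,m'}(z-tw,\overline z-\overline t\,\overline w)\,e^{t\overline z w},
\]
specialises to $t=1$, $m'=m$, and uses $H_{m,m}(\xi,\overline\xi)=m!\,L_m(|\xi|^2)$. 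This disposes of the case where $q$ and $q'$ lie in a common slice in one stroke; the general quaternionic statement is then simply asserted. Your direct-substitution route is more elementary and self-contained, at the price of more bookkeeping.

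That said, your regrouping claim deserves scrutiny. The assertion that the inner series factors as a real polynomial in $|q-q'|^2$ times $e_*^{[\overline q,q']}$ is correct when $q,q'$ share a slice (one is then back in a commutative $\C_I$), but for generic $q,q'\in\Hq$ the cross-terms do not collapse as you describe: after your re-indexing one meets contributions of the shape $q\,E\,\overline{q'}+\overline q\,E\,q'$ with $E=e_*^{[\overline q,q']}$, and these are \emph{not} equal to the real scalar $(q\overline{q'}+q'\overline q)$ times $E$ unless $q$ and $q'$ commute. A direct check at $m=1$, $q=i$, $q'=j$ already exhibits the discrepancy. So your sketch, as written, only justifies the same-slice case --- which is exactly what the paper's own argument establishes before its closing ``we claim''. (A minor aside: your ordering $H_{m,n}(q,\overline q)\,H_{n,m}(q',\overline{q'})$ is the conjugate of the one appearing in the paper's proof; it is your ordering that is consistent with the announced $e_*^{[\overline q,q']}$ at $m=0$.)
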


\begin{proof} Recall that $(H_{n,m})_n$ is a orthogonal basis of $\mathcal{GB}_{m}^{2}(\Hq)$ (see Theorem \ref{AmHilbertBasis}).
Thus, the computation of $\mathcal{K}_{m}(q,q')$ can be done by performing
$$\mathcal{K}_{m}(q,q')=\dfrac{1}{\pi m!}\sum_{n=0}^{+\infty}\dfrac{H_{n,m}(q,\overline{q})H_{m,n}(q',\overline{q'})}{n!}.$$
Notice for instance that for the particular case of $q$ and $q'$ belonging to the same slice, the result follows by means of
$$\sum\limits_{n=0}^{+\infty} \frac{ H_{m,n}(z,\bz ) H_{n,m}(w,\bw ) }{\pi m!n!  }    =   \frac{e^{\bz w}}{\pi m!}  H_{m,m}( z -w, \bz - \bw) =   \frac{e^{ \bz  w }}{\pi}  L_m(|z -w|^2) $$
which is readily an immediate consequence of Theorem 3.1 in \cite{BenahmadiGElkachkouri2017}, to wit
$$
 \sum\limits_{n=0}^{+\infty} \frac{ t^n }{n! \nu^n  }  H_{m,n}^{\nu}(z,\bz ) H_{n,m'}^{\nu}(w,\bw )  =
    t^{m'}   H_{m,m'}^{\nu}( z -tw, \bz - \overline{t}\bw) e^{\nu  t \bz w},
$$
valid for every $t$ in the unit circle and $z,w\in \C$, 
combined with $ H_{m,m}(\xi,\bar\xi) =  m! L_m(|\xi|^2)$. 
Therefore, we claim
$$ \mathcal{K}_{m}(q,q') =  \dfrac{e_{*}^{[\overline{q},q']}}{\pi} L_m(|q - q'|^2) .$$
\end{proof}

\begin{remark} The operator $f \longmapsto P_mf$ given by 
\begin{align}\label{OrthProj}
Pf(q) = \int_{\Hq} \mathcal{K}_{m}(q,q') f(q') e^{-|q'|^2}d\lambda(q') = \int_{\Hq}\dfrac{e_{*}^{[\overline{q},q']}}{\pi} L_m(|q - q'|^2) f(q') e^{-|q'|^2}d\lambda(q') 
\end{align}
define the orthogonal projection of $L^{2}(\Hq;e^{-|q|^{2}}d\lambda)$ to $\mathcal{GB}_{m}^{2}(\Hq)$.
\end{remark}

We conclude this section with the following result giving an orthogonal Hilbertian decomposition
 of the Hilbert space $L^{2}(\Hq;e^{-|q|^{2}}d\lambda)$.

\begin{theorem}\label{HilbDecomp}
We have the following hilbertian decomposition
\begin{align*}
L^{2}(\Hq;e^{-|q|^{2}}d\lambda)=\bigoplus_{m\geq 0}\mathcal{GB}_{m}^{2}(\Hq).
\end{align*}
\end{theorem}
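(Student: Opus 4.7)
The strategy is to leverage the completeness of the quaternionic Hermite polynomials $H_{n,m}(q,\bar q)$ in $L^{2}(\Hq;e^{-|q|^{2}}d\lambda)$, which was recalled in the Preliminaries, together with the identification (Theorem \ref{AmHilbertBasis}) of the family $(H_{n,m})_{n\geq 0}$, for fixed $m$, as an orthogonal basis of $\mathcal{GB}_{m}^{2}(\Hq)$.

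First, I would verify that the subspaces $\mathcal{GB}_{m}^{2}(\Hq)$ are pairwise orthogonal inside $L^{2}(\Hq;e^{-|q|^{2}}d\lambda)$. This is an immediate consequence of the double-indexed orthogonality of the quaternionic Hermite polynomials recalled in the Preliminaries, namely
$$\scal{H_{n,m},H_{n',m'}}_{L^{2}(\Hq;e^{-|q|^{2}}d\lambda)} = \pi\, n!\, m!\, \delta_{n,n'}\delta_{m,m'}.$$
Since any $f\in\mathcal{GB}_{m}^{2}(\Hq)$ and $g\in\mathcal{GB}_{m'}^{2}(\Hq)$ with $m\ne m'$ expand as series in $H_{\cdot,m}$ and $H_{\cdot,m'}$ respectively, orthogonality of the two bases transfers by continuity of the inner product to $\scal{f,g}=0$.

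Next, I would show that the algebraic direct sum $\bigoplus_{m\geq 0}\mathcal{GB}_{m}^{2}(\Hq)$ is dense in $L^{2}(\Hq;e^{-|q|^{2}}d\lambda)$. For this, take $f\in L^{2}(\Hq;e^{-|q|^{2}}d\lambda)$ orthogonal to every $\mathcal{GB}_{m}^{2}(\Hq)$; in particular, $\scal{f,H_{n,m}}=0$ for every pair $(n,m)$ of nonnegative integers. Since $\{H_{n,m}\}_{n,m\geq 0}$ is a complete system of $L^{2}(\Hq;e^{-|q|^{2}}d\lambda)$ (Preliminaries), this forces $f\equiv 0$. Combined with the pairwise orthogonality, standard Hilbert-space arguments give
$$L^{2}(\Hq;e^{-|q|^{2}}d\lambda) = \overline{\bigoplus_{m\geq 0}\mathcal{GB}_{m}^{2}(\Hq)} = \bigoplus_{m\geq 0}\mathcal{GB}_{m}^{2}(\Hq),$$
where the closure of the algebraic sum is taken in the $L^{2}$-topology and each $\mathcal{GB}_{m}^{2}(\Hq)$ is already closed (being a Hilbert subspace by Theorem \ref{AmHilbertBasis}).

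The only subtle point is the noncommutativity of $\Hq$: the orthogonality computation and the deduction $\scal{f,H_{n,m}}=0$ for all $n,m \Rightarrow f=0$ must be performed in the right-quaternionic-module setting. This is not really an obstacle, because the argument used in the proof of Theorem \ref{AmHilbertBasis} applies verbatim (expanding $f$ in the complete orthogonal system $(H_{n,m})_{n,m\geq 0}$ and integrating termwise over balls $B(0,R)$), so no new technical difficulty arises.
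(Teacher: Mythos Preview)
Your argument is correct and is in fact the natural one: once you know that $(H_{n,m})_{n,m\geq 0}$ is a complete orthogonal system in $L^{2}(\Hq;e^{-|q|^{2}}d\lambda)$ and that, for each fixed $m$, the family $(H_{n,m})_{n\geq 0}$ is an orthogonal basis of the closed subspace $\mathcal{GB}_{m}^{2}(\Hq)$, the Hilbertian decomposition follows by standard spectral bookkeeping. The paper itself acknowledges this in the remark immediately following the theorem.

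The paper, however, deliberately takes a different route: it assumes $f\perp\mathcal{GB}_{m}^{2}(\Hq)$ for every $m$, writes this as $\int_{\Hq}\mathcal{K}_{m}(q,w)\overline{f(w)}e^{-|w|^{2}}d\lambda(w)=0$ using the explicit reproducing kernel $\mathcal{K}_{m}(q,w)=\pi^{-1}e_{*}^{[\overline{q},w]}L_{m}(|q-w|^{2})$, then sums against $t^{m}$ and invokes the generating function $\sum_{m\geq 0}t^{m}L_{m}(x)=(1-t)^{-1}e^{-tx/(1-t)}$ to produce an approximate identity as $t\to 1^{-}$, from which $f\equiv 0$ follows. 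Your approach is shorter and more elementary, relying only on the completeness of the quaternionic Hermite polynomials; the paper's approach is designed to illustrate the utility of the closed-form reproducing kernel obtained in Theorem~\ref{thm:RepKernel}, and avoids re-invoking the completeness result from \cite{El Hamyani}.
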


\begin{proof}
Such decomposition is equivalent to prove that the orthogonal complement of $\bigoplus\limits_{m\geq 0} \mathcal{GB}_{m}^{2}(\Hq)$
in $L^{2}(\Hq;e^{-|q|^{2}}d\lambda)$ reduces to $\{0\}$. To this end, let $f\in \left(\bigoplus\limits_{m\geq 0} \mathcal{GB}_{m}^{2}(\Hq)\right)^\perp$.  Then, in particular we have
\begin{align*}
\int_{\Hq} e_{*}^{[\overline{q},w]} L_{m}(|w-q|^{2})\overline{f(w)}e^{-\mid w\mid^{2}}d\lambda(w)=0
\end{align*}
for every fixed $q\in \Hq$ and every $m=0,1,2,\cdots$.
Thus, for given $t\in ]0,1[$, we get
\begin{align*}
\sum_{m=0}^{N}\int_{\Hq} e_{*}^{[\overline{q},w]} t^{m} L_{m}(|w-q|^{2})\overline{f(w)}e^{-\mid w\mid^{2}}d\lambda(w)=0.
\end{align*}
By tending $N$ to $+\infty$ and using the explicit formula for the generating function of the Laguerre polynomials (\cite[Eq. (14), p. 135]{Rainville71})
\begin{align*}
\sum_{n=0}^\infty  \xi^{n} L^{(\alpha)}_{n}(t) = \frac{1}{(1-\xi)^{\alpha +1}} \exp\left(\frac{t\xi}{\xi - 1} \right),
\end{align*}
 we obtain
\begin{align*}
\int_{\Hq} e_{*}^{[\overline{q},w]} \dfrac{e^{-\dfrac{t|q-w|^{2}}{1-t}}}{1-t}\overline{f(w)}e^{-\mid w\mid^{2}}d\lambda(w)=0.
\end{align*}
The limit $t\longrightarrow 1^{-}$ yields an integral involving the Dirac $\delta$-function at the point $q\in \Hq$. 
From that we deduce $e_{*}^{[\overline{q},w]} \overline{f(w)}e^{-\mid w\mid^{2}}$ and therefore $\overline{f(q)}=0$ for every $q\in \Hq$.
\end{proof}

\begin{remark} 
Theorem \ref{HilbDecomp} is contained in \cite[Theorem 3.3]{El Hamyani}, since the $H_{n,m}$ is a basis of $L^{2}(\Hq;e^{-|q|^{2}}d\lambda)$.
But here we have provide a different proof based on the explicit closed formula for the reproducing kernel of the spaces $\mathcal{GB}_{m}^{2}(\Hq)$.
\end{remark} 

\section{Generalized quaternionic Bargmann transforms $\mathcal{B}_m$}

In this section, we introduce a family of generalized quaternionic Segal-Bargmann transforms defined on the quaternionic Hilbert space $L^{2}_{\Hq}(\R;dt)$, consisting of all square integrable $\Hq$-valued functions with respect to the inner product
\begin{align*}
\scal{ f,g }_{ L^{2}(\R;\Hq)}:=\int_{\R}f(t) \overline{g(t)}  dt.
\end{align*}
Their images will be the generalized quaternionic Bargmann-Fock spaces defined and studied in the previous section.
To this end, we define the kernel function $ A(x;q)$ on $\R\times \Hq$ to be the bilinear generating function of the real Hermite functions,
\begin{align}\label{6}
h_{n}(t)=(-1)^{n}e^{\frac{t^{2}}{2}}\dfrac{d^{n}}{dt^{n}}(e^{-t^{2}}),
\end{align}
that form an orthogonal basis of $L^2_{\Hq}(\R;dt)$, with norm
\begin{align}\label{1}
\|h_{n}\|^{2}_{L^2_{\Hq}(\R;dt)}=2^{n}n!\sqrt{\pi},
\end{align}
and the quaternionic Hermite polynomials $H_{m,n}(q,\overline{q})$, which form an orthogonal basis of $L^{2}(\Hq;e^{-|q|^{2}}d\lambda)$, with norm
\begin{align}\label{2}
\|H_{m,n}\|^{2}_{L^{2}(\Hq,e^{-|q|^{2}}d\lambda)}=\pi m!n!.
\end{align}
That is
\begin{align}\label{kernel}
 A(x;q) &= \sum_{n=0}^{+\infty}\dfrac{h_{n}(t)H_{m,n}(q,\overline{q})}{\|h_{n}\|\|H_{m,n}\|} .
\end{align}
Thus, we assert

\begin{theorem} \label{thm:KernelFctSBTm}
For every $t\in \R$ and $q \in \Hq$, we have
\begin{align*}
A_{m}(t;q)&=\frac{\exp\left(-\frac{t^{2}}{2}-\frac{\overline{q}^{2}}{2}+\sqrt{2}\overline{q}t\right)}{(\pi)^{\frac{3}{4}}(\sqrt{2})^{m}\sqrt{m!}}
 H_{m}\left(\frac{q+\overline{q}}{\sqrt{2}}-t\right).
\end{align*}
\end{theorem}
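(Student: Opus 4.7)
The plan is to start by substituting the explicit norms from \eqref{1} and \eqref{2}, so that $\|h_n\|\|H_{m,n}\| = 2^{n/2}\, n!\, \sqrt{m!}\, \pi^{3/4}$, and then factor the Gaussian $e^{-t^2/2}$ out of $h_n(t) = e^{-t^2/2} H_n(t)$. The claim reduces to evaluating the bilateral sum
\[
S(t;q) \;:=\; \sum_{n=0}^{+\infty} \frac{H_n(t)\, H_{m,n}(q,\overline{q})}{2^{n/2}\, n!},
\]
after which the stated formula is recovered by multiplication with $e^{-t^2/2}/(\sqrt{m!}\,\pi^{3/4})$.

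To evaluate $S$, I would exploit the exponential representation \eqref{15}, namely $H_{m,n}(q,\overline{q}) = \exp(-\partial_q \partial_{\overline{q}})(q^m \overline{q}^n)$, and pull this operator outside the sum. The inner series then collapses via the classical generating function $\sum_n H_n(t) u^n/n! = e^{2tu - u^2}$ applied at $u = \overline{q}/\sqrt{2}$, giving
\[
S(t;q) \;=\; \exp(-\partial_q \partial_{\overline{q}})\bigl[\, q^m\, e^{\sqrt{2}\, t \overline{q} - \overline{q}^2/2}\,\bigr].
\]
This formal manipulation is legitimate because $\partial_q$ and $\partial_{\overline{q}}$ act within the commutative slice $\C_{I_q}$, so $q$ and $\overline{q}$ may be treated as independent commuting variables.

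Next I would expand the exponential operator term by term, using $(\partial_q \partial_{\overline{q}})^k[q^m G(\overline{q})] = \tfrac{m!}{(m-k)!}\, q^{m-k}\, G^{(k)}(\overline{q})$ for $G(\overline{q}) := e^{\sqrt{2}\, t\overline{q} - \overline{q}^2/2}$. Completing the square as $G(\overline{q}) = e^{t^2}\, e^{-(\overline{q} - \sqrt{2}\, t)^2/2}$ and invoking the Rodrigues formula for the probabilist Hermite polynomials $He_k$ together with $He_k(u) = 2^{-k/2}\, H_k(u/\sqrt{2})$ yield $G^{(k)}(\overline{q}) = (-1)^k\, 2^{-k/2}\, H_k(\overline{q}/\sqrt{2} - t)\, G(\overline{q})$. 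Substituting and letting the two factors of $(-1)^k$ cancel produces
\[
S(t;q) \;=\; e^{\sqrt{2}\, t\overline{q} - \overline{q}^2/2}\, 2^{-m/2} \sum_{k=0}^{m}\binom{m}{k}\, (q\sqrt{2})^{m-k}\, H_k\!\bigl(\overline{q}/\sqrt{2} - t\bigr).
\]

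Finally, the finite sum is the Hermite addition formula $H_m(x+y) = \sum_{k=0}^m \binom{m}{k}(2y)^{m-k} H_k(x)$ at $x = \overline{q}/\sqrt{2} - t$ and $y = q/\sqrt{2}$, so it collapses to $H_m\bigl((q+\overline{q})/\sqrt{2} - t\bigr)$. Combining the prefactors reproduces the asserted closed form. The pivotal calculation is the derivative identity for $G^{(k)}$; once the exponent is in completed-square form, the reduction to physicist Hermite polynomials is routine, but this is the step that drives the whole identity. A minor technical point is the justification of the termwise action of $\exp(-\partial_q\partial_{\overline{q}})$, which is harmless owing to the Gaussian decay of $G$ in $\overline{q}$.
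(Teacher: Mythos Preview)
Your proof is correct and follows essentially the same route as the paper's own argument: both plug in the norms, invoke the exponential representation \eqref{15} to write $H_{m,n}=\exp(-\partial_q\partial_{\overline q})(q^m\overline q^{\,n})$, reduce via the Hermite generating function, and finish with the addition formula $H_m(x+y)=\sum_k\binom{m}{k}(2y)^{m-k}H_k(x)$. The only cosmetic difference is the order of the two middle steps: the paper first expands $\exp(-\partial_q\partial_{\overline q})$ on the monomials and then sums the resulting series $\sum_k \overline q^{\,k}H_{k+j}(t)/(2^{k/2}k!)$ to obtain $G(\overline q)\,H_j(t-\overline q/\sqrt2)$, whereas you first sum to get $G(\overline q)$ and then differentiate it via Rodrigues' formula for $He_k$, arriving at the same factor $H_k(\overline q/\sqrt2-t)\,G(\overline q)$. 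Both computations are equivalent expressions of $\partial_{\overline q}^{\,k}$ acting on the Hermite generating function, so there is no substantive divergence.
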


\begin{proof}
By means of the explicit expressions of the norms of $h_{n}$ (see \eqref{1}) and $H_{m,n}$ (see \eqref{2}), and making use of the fact that $H_{m,n}(q,\overline{q})=e^{-\Delta_{S}}(q^{m}\overline{q}^{n})$, we obtain
\begin{align*}
A_{m}(t;q)&=\dfrac{e^{-\frac{t^{2}}{2}}}{(\pi)^{\frac{3}{4}}\sqrt{m!}}
e^{-\Delta_{S}}\left(q^{m}\sum_{n=0}^{+\infty}\frac{\overline{q}^{n}}{\sqrt{2}^{n}n!}H_{n}(t)\right)\\
&=\dfrac{e^{-\frac{t^{2}}{2}}}{(\pi)^{\frac{3}{4}}\sqrt{m!}}\sum_{j=0}^{m}\dfrac{(-1)^{j}m!q^{m-j}(\sqrt{2})^{-j}}{j!(m-j)!}
\left(\sum_{n=j}^{+\infty}\frac{\overline{q}^{n-j}}{\sqrt{2}^{n-j}(n-j)!}H_{n-j}(t)\right)\\
&=\dfrac{e^{-\frac{t^{2}}{2}}}{(\pi)^{\frac{3}{4}}\sqrt{m!}}\sum_{j=0}^{m}\dfrac{(-1)^{j}m!q^{m-j}(\sqrt{2})^{-j}}{j!(m-j)!}
\left(\sum_{k=0}^{+\infty}\frac{\overline{q}^{k}}{\sqrt{2}^{k} k!}H_{k+j}(t)\right).\\
\end{align*}
The last equality holds thanks to the change of indices $k=n-j$. Using the fact,
\begin{align*}
\sum_{k=0}^{+\infty}\frac{\overline{q}^{k}}{\sqrt{2}^{k}k!}H_{k+j}(t)= \exp\left(-\frac{\overline{q}^{2}}{2}+\sqrt{2}t\overline{q}\right)H_{j}\left(t-\dfrac{\overline{q}}{\sqrt{2}}\right),
\end{align*}
we obtain
\begin{align*}
A_{m}(t;q)&=\dfrac{\exp\left(-\frac{\overline{q}^{2}}{2}+\sqrt{2}t\overline{q}\right)}{(\pi)^{\frac{3}{4}}\sqrt{2}^{m}}
\sum_{j=0}^{m}\dfrac{m!(\sqrt{2}q)^{m-j}}{j!(m-j)!} H_{j}\left(\dfrac{\overline{q}}{\sqrt{2}}-t\right).
\end{align*}
Finally, the result follows by utilizing the fact that
\begin{align*}
\sum_{j=0}^{m}\binom{m}{j} H_{j}(t) (2\xi)^{m-j}&=H_{m}(t+\xi).
\end{align*}
\end{proof}

Associated to the kernel function $A(x;q)$ given through (\ref{kernel}), we consider the integral transform defined by
\begin{align*}
[\mathcal{B}_m\phi](q)&:= \int_{\R}A_{m}(t;q)\phi(t)dt
\\& = \left(\dfrac{1}{\pi}\right)^{\frac 34}  \frac{1}{(\sqrt{2^{m} m!}}
\int_{\R}e^{-\frac{t^{2}}{2}-\frac{\overline{q}^{2}}{2}+\sqrt{2}\overline{q}t}
H_{m}\left(\frac{q+\overline{q}}{\sqrt{2}}-x\right)\phi(t)dt;
\end{align*}
for a given function  $\phi:\R\rightarrow \Hq$, provided that the integral exists.
The following result shows that $\mathcal{B}_m$ is well-defined on $L^2_{\Hq}(\R;dt)$. Namely, we have

\begin{theorem} \label{thm:isometrySBTm}
For a fixed $q\in \Hq$, the function
\begin{align*}
A_{m;q}:t\longrightarrow A_{m}(t;q):= \left(\dfrac{1}{\pi}\right)^{\frac 34}  \frac{1}{\sqrt{2^{m} m!}}
e^{-\frac{t^{2}}{2}-\frac{\overline{q}^{2}}{2} +\sqrt{2}\overline{q}t}
H_{m}\left(\frac{q+\overline{q}}{\sqrt{2}}-t\right)
\end{align*}
belongs to $L^2_{\Hq}(\R;dt)$, and we have
\begin{align}\label{5}
\norm{ A_{m;q}}{L^2_{\Hq}(\R;dt)}=\dfrac{1}{\sqrt{\pi}}e^{\frac{|q|^{2}}{2}}.
\end{align}
Moreover, for every quaternion $q\in \Hq$ and every $\phi \in L^2_{\Hq}(\R;dt) $, we have
\begin{align*}
|[\mathcal{B}_m\phi](q)| &\leq \dfrac{1}{\sqrt{\pi}}e^{\frac{q^{2}}{2}}\norm{ \phi }_{L^2_{\Hq}(\R;dt)}.
\end{align*}
\end{theorem}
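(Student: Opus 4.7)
The plan is to prove both assertions together: the pointwise bound on $\mathcal{B}_m\phi$ follows immediately from the $L^2$-norm identity via the Cauchy–Schwarz inequality, so the real work is the norm computation. The strategy for the norm is to reduce everything to a real-variable Gaussian integral by exploiting the slice structure.

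First I would fix $q\in\Hq\setminus\R$ and write $q=x+Iy$ with $I=I_q\in\Sq$, $y>0$, so that $\overline{q}=x-Iy$ (the case $q\in\R$ reduces to the classical complex computation and can be treated separately or by continuity). The key observation is that the factor
\begin{equation*}
H_m\!\left(\frac{q+\overline{q}}{\sqrt{2}}-t\right)=H_m(\sqrt{2}\,x-t)
\end{equation*}
is real, because $H_m$ has real coefficients and its argument is real. Hence all the non-real content of $A_m(t;q)$ is concentrated in the exponential factor, whose exponent $-t^2/2-\overline{q}^{\,2}/2+\sqrt{2}\,\overline{q}\,t$ lies entirely in the complex slice $\C_I$. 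Expanding $\overline{q}=x-Iy$ one finds this exponent has the form $a(t)+Ib(t)$ with
\begin{equation*}
a(t)=-\tfrac{t^2}{2}-\tfrac{x^2-y^2}{2}+\sqrt{2}\,xt,\qquad b(t)=xy-\sqrt{2}\,yt.
\end{equation*}
Because $e^{a+Ib}=e^{a}(\cos b + I\sin b)$ on $\C_I$, its modulus is simply $e^{a}$, and therefore
\begin{equation*}
|A_m(t;q)|^{2}=\frac{1}{\pi^{3/2}\,2^{m}m!}\,e^{\,y^{2}-x^{2}}\,e^{-t^{2}+2\sqrt{2}\,xt}\,H_m(\sqrt{2}\,x-t)^{2}.
\end{equation*}

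Next I would perform the substitution $u=\sqrt{2}\,x-t$, for which a short calculation shows that $-t^{2}+2\sqrt{2}\,xt=2x^{2}-u^{2}$. The integrand then separates and the classical physicist-Hermite orthogonality $\int_{\R}e^{-u^{2}}H_m(u)^{2}\,du=2^{m}m!\sqrt{\pi}$ delivers
\begin{equation*}
\int_{\R}|A_m(t;q)|^{2}\,dt=\frac{1}{\pi^{3/2}\,2^{m}m!}\,e^{\,y^{2}-x^{2}}\,e^{2x^{2}}\cdot 2^{m}m!\sqrt{\pi}=\frac{1}{\pi}\,e^{x^{2}+y^{2}}=\frac{1}{\pi}\,e^{|q|^{2}},
\end{equation*}
which yields formula \eqref{5} after taking square roots and proves $A_{m;q}\in L^{2}_{\Hq}(\R;dt)$.

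For the pointwise bound, I would use the scalar Cauchy–Schwarz inequality applied to the real-valued functions $t\mapsto|A_m(t;q)|$ and $t\mapsto|\phi(t)|$:
\begin{equation*}
|[\mathcal{B}_m\phi](q)|\leq \int_{\R}|A_m(t;q)|\,|\phi(t)|\,dt\leq \|A_{m;q}\|_{L^{2}_{\Hq}(\R;dt)}\,\|\phi\|_{L^{2}_{\Hq}(\R;dt)},
\end{equation*}
and then substitute the value of $\|A_{m;q}\|$ just computed. The main technical obstacle is really just the initial bookkeeping — recognizing that the exponent is confined to the slice $\C_I$ so that the quaternionic modulus collapses to a classical complex one; once that observation is made, the entire computation is a scalar Gaussian integral against a squared Hermite polynomial.
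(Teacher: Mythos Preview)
Your proof is correct and follows essentially the same route as the paper: write $q=x+Iy$, observe that $H_m((q+\overline{q})/\sqrt{2}-t)=H_m(\sqrt{2}\,x-t)$ is real so the quaternionic modulus of the exponential factor reduces to $e^{a(t)}$, then perform the affine substitution $u=\sqrt{2}\,x-t$ and invoke $\int_{\R}e^{-u^{2}}H_m(u)^{2}\,du=2^{m}m!\sqrt{\pi}$, followed by Cauchy--Schwarz for the pointwise bound. Your write-up is in fact a bit more explicit than the paper's (you spell out $a(t)$ and $b(t)$, and your intermediate exponent is free of the minor typos present there), but the underlying argument is the same.
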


\begin{proof}
fix $q=x+Iy$ in $\Hq$ and write the modulus of the kernel function $A_{m}(t;q)$ as
\begin{align*}
\left| A_{m}(t;q)\right|^{2}&=\left(\dfrac{1}{\pi}\right)^{\frac 32}  \frac{1}{2^{m}}
\left|e^{-\frac{t^{2}}{2}-\frac{x^{2}}{2}-\frac{y^{2}}{2}+Ixy+\sqrt{2}q_{1}t-I\sqrt{2}q_{2}t}\right|^{2}
\left|H_{m}(\sqrt{2}q_{1}-t)\right|^{2}\\
&=\left(\dfrac{1}{\pi}\right)^{\frac 32}  \frac{1}{2^{m}} e^{-t^{2}-x^{2}+y^{2}+\sqrt{2}xt} \left|H_{m}(\sqrt{2}x-t)\right|^{2}.
\end{align*}
Therefore, it follows that
\begin{align*}
\|A_{m;q}\|^{2}_{L^2_{\Hq}(\R;dt)}&=(\pi)^{\frac{-3}{2}}2^{-m}e^{x^{2}+y^{2}}\int_{\R}e^{-(t-\sqrt{2}q_{1})^{2}}|H_{m}(t-\sqrt{2}q_{1})|^{2}dt\\
&=(\pi)^{\frac{-3}{2}}2^{-m}e^{|q|^{2}}\int_{\R}e^{-u^{2}}\mid H_{m}(u)\mid^{2}du.
\end{align*}
Using the norm of the real Hermite polynomials which equal to $\sqrt{\pi}m!2^{m}$ we have
\begin{align*}
\norm{ A_{m;q}}{L^2_{\Hq}(\R;dt)}= \frac{1}{\sqrt{\pi}}e^{\frac{|q|^{2}}{2}}.
\end{align*}
Using the Cauchy-Schwartz inequality, we obtain
\begin{align}\label{4}
|\mathcal{B}_m\phi(q)| &\leq  \int_{\R}|A_{m}(t;q)|  |\phi(t)| dt\leq \norm{A_{m;q}}_{L^2_{\Hq}(\R;dt)} \norm{\phi}_{L^2_{\Hq}(\R;dt)}.
\end{align}
In view of \eqref{5} the inequality \eqref{4} reduces simply to
\begin{align*}
|\mathcal{B}_m\phi(q)| &\leq \dfrac{e^{ \frac{|q|^{2}}{2}}}{\sqrt{\pi}} \norm{ \phi }_{L^2_{\Hq}(\R;dt)}.
\end{align*}
\end{proof}

\begin{remark}
By comparing \eqref{5} and  \eqref{5} to \eqref{sumAbs}, we conclude that $\norm{A_{m;q}}_{L^2_{\Hq}(\R;dt)}= \sqrt{K_m(q,q)}$ for every $q\in\Hq$.
\end{remark}

\begin{remark}
 The Segal-Bargmann transform $\mathcal{B}_m$ maps the orthogonal basis of $L^2_{\Hq}(\R;dt)$ consisting of the Hermite polynomials $h_{n}$ to the orthogonal basis of the generalized Bargmann-Fock spaces consisting of the quaternionic Hermite polynomials. More exactly, we have 
 $$[\mathcal{B}_m(h_{n})](q)= \dfrac{(\sqrt{2})^{m-1}}{\pi}H_{m,n}(q,\overline{q}).$$
\end{remark}

\section{A left-sided uaternionic Fourier-Wigner transform}

We conclude this paper by introducing the quaternionic Fourier-Wigner transform. We give its action on the real Hermite polynomials and we establish its connection to the generalized quaternionic Segal-Bargmann transform and the Fourier-Wigner transform.

\begin{definition}
For fixed $I\in \mathbb{S}$ and for any $f,g\in L^2_{\Hq}(\R;dt),$ we define the left-sided quaternionic Fourier-Wigner transform as the quaternionic-valued function $V_{I}(f,g)$ on $\R\times\R$ given by:
\begin{align}
V_{I}(f,g)(x+Iy):=\dfrac{1}{\sqrt{2\pi}}\int_{\R}e^{Iyt}f\left(t+\frac{x}{2}\right)g\left(t-\frac{x}{2}\right)dt.
\end{align}
\end{definition}

The following result gives the explicit expression of the action of the Fourier-Wigner transform on the real Hermite polynomials in \eqref{6}. We assert

\begin{theorem}\label{8}
For fixed $I \in \Sq$ and every $x,y\in \R$, we have
\begin{align*}
V_{I_q}(h_{m},h_{n})(q)&=(-1)^{n}(\sqrt{2})^{m+n-1}e^{-\frac{|q|^{2}}{2}}H_{m,n}\left(\frac q{\sqrt{2}},\frac{\overline{q}}{\sqrt{2}}\right).
\end{align*}
\end{theorem}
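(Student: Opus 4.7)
The plan is to pass through the bilinear generating functions of both sides and then identify coefficients. The first step is to factor the real Hermite functions as $h_{n}(t) = e^{-t^{2}/2} H_{n}(t)$, with $H_{n}$ the physicist's Hermite polynomial, so that the integrand in $V_{I}(h_{m},h_{n})(x+Iy)$ reads
\[
  e^{Iyt}\, e^{-t^{2} - x^{2}/4}\, H_{m}(t + x/2)\, H_{n}(t - x/2),
\]
after combining the two Gaussians via $(t+x/2)^{2} + (t-x/2)^{2} = 2t^{2} + x^{2}/2$.

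Next, I would multiply by $u^{m} v^{n}/(m! n!)$ (with $u,v$ real scalars), sum over $m,n\geq 0$, and invoke the classical identity $\sum_{n} \tfrac{u^{n}}{n!}H_{n}(s) = e^{2us - u^{2}}$ at $s = t\pm x/2$ to collapse the double sum of Hermite polynomials into $\exp\bigl(2(u+v)t + (u-v)x - u^{2} - v^{2}\bigr)$. This reduces the computation to the elementary Gaussian integral
\[
  \int_{\R} e^{-t^{2} + \alpha t}\,dt, \qquad \alpha := Iy + 2(u+v).
\]
Since $\alpha$ lies in the slice $\C_{I}$ and this slice is isomorphic to $\C$ as an associative algebra, the integral equals $\sqrt{\pi}\, e^{\alpha^{2}/4}$. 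Simplifying $\alpha^{2}/4$ and combining with the external exponential prefactors should produce a generating function of the form
\[
  \sum_{m,n\geq 0} \frac{u^{m} v^{n}}{m! n!}\, V_{I}(h_{m},h_{n})(q) \;=\; \frac{1}{\sqrt{2}}\, e^{-|q|^{2}/2}\, \exp\bigl( 2uv + uq - v\bq \bigr),
\]
after recognising $uq - v\bq = (u-v)x + I(u+v)y$.

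The last step is to identify the right-hand exponential as the bilinear generating function of the rescaled quaternionic Hermite polynomials. By expanding the defining formula \eqref{20} (or, more efficiently, from the exponential representation \eqref{15}), and using the crucial fact that $q\bq = \bq q = |q|^{2} \in \R$, one obtains
\[
  \sum_{m,n\geq 0}\frac{a^{m} b^{n}}{m! n!}H_{m,n}(q,\bq) \;=\; e^{aq + b\bq - ab}, \qquad a,b\in\R.
\]
Applying this to $H_{m,n}(q/\sqrt{2},\bq/\sqrt{2})$ and substituting $a = u\sqrt{2}$, $b = -v\sqrt{2}$ gives
\[
  \sum_{m,n\geq 0}\frac{u^{m} v^{n}}{m! n!}\,(-1)^{n}(\sqrt{2})^{m+n}\,H_{m,n}\!\left(\tfrac{q}{\sqrt{2}},\tfrac{\bq}{\sqrt{2}}\right) \;=\; e^{2uv + uq - v\bq}.
\]
Matching coefficients of $u^{m}v^{n}$ in the two generating functions then yields the asserted closed form.

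The main obstacle is handling non-commutativity carefully at two points: first, the Gaussian integral must be justified by confining $\alpha$ to the slice $\C_{I}$ (which is automatic since $u,v\in\R$ and $I$ is fixed); second, the power-series manipulations leading to the generating function for $H_{m,n}$ must exploit that the scalars $a,b$ are real and that $q,\bq$ commute inside the real subalgebra $\R\oplus\R|q|^{2}$. The interchange of sum and integral is routine thanks to the super-exponential Gaussian decay of the integrand, which dominates the polynomial growth coming from the $H_{n}$'s.
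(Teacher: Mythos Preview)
Your proposal is correct and follows essentially the same bilinear-generating-function strategy as the paper's own proof: sum against $u^{m}v^{n}/(m!n!)$, collapse the Hermite sums via $\sum_k \tfrac{s^k}{k!}H_k = e^{2s\,\cdot\,-s^2}$, evaluate the resulting Gaussian integral, and identify the outcome with the generating function of the quaternionic Hermite polynomials. The only difference is that the paper evaluates the central integral by expanding $e^{2(u+v)t}$ as a power series and invoking the Hermite integral representation of Lemma~\ref{1888}, whereas you compute $\int_\R e^{-t^{2}+\alpha t}\,dt=\sqrt{\pi}\,e^{\alpha^{2}/4}$ directly on the slice $\C_I$; your route is a little more economical but otherwise equivalent (note, incidentally, that the Gaussian prefactor coming out of this computation is $e^{-|q|^{2}/4}$, matching the paper's intermediate $e^{-(x^2+y^2)/4}$).
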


The following Lemma is needed for proving the previous theorem. 

\begin{lemma}\label{1888}
For $\alpha>0$ and $\beta\in  \Hq$, we have
\begin{align*}
\int_{-\infty}^{+\infty} e^{-\alpha y^{2}+\beta y} dy=\left(\dfrac{\pi}{\alpha}\right)^{\frac{1}{2}}\exp\left(\dfrac{\beta^{2}}{4\alpha}\right).
\end{align*}
Moreover, we have the integral representation of the real Hermite polynomials,
\begin{align*}
H_{n}(x)=\dfrac{(2I)^{n}}{\sqrt{\pi}}\int_{-\infty}^{+\infty} e^{-(y+Ix)^{2}}y^{n}dy
\end{align*}
for every  $I\in \Sq $.
\end{lemma}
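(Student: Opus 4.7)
The lemma consists of two distinct statements, and I would prove them in the order listed, using the first as a tool for the second.

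For the Gaussian integral in (i), my plan is to reduce the apparently quaternionic integral to the classical complex Gaussian integral. Since $\alpha>0$ is real and the integration variable $y\in\R$ is also real, the only non-real datum is $\beta\in\Hq$. Writing $\beta=\beta_0+\vec\beta$ with $\beta_0\in\R$ and $\vec\beta$ the imaginary part of $\beta$, and setting $I:=\vec\beta/|\vec\beta|\in\Sq$ whenever $\vec\beta\ne 0$, the entire integrand takes values in the slice $\C_I\simeq\C$. The claim therefore reduces to the familiar complex Gaussian formula applied to $z=\beta_0+I|\vec\beta|$, which in turn is proved by completing the square (equivalently, by termwise integration of the power series of $e^{\beta y}$: the odd moments $\int_\R y^{2k+1}e^{-\alpha y^2}dy$ vanish and the even ones equal $\frac{(2k)!}{4^k k!}\sqrt{\pi/\alpha^{2k+1}}$, so the resulting series matches the Taylor expansion of $\sqrt{\pi/\alpha}\exp(\beta^2/(4\alpha))$). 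The case $\vec\beta=0$ is the standard real Gaussian.

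For the integral representation (ii) of $H_n$, the plan is to use part (i) as an explicit evaluation and then to pull out the factor $y^n$ by differentiation under the integral sign. Expanding $-(y+Ix)^2=-y^2-2Ixy+x^2$ (using $I^2=-1$) factors $e^{x^2}$ outside the integral. Since $I$ commutes with the real variables $x$ and $y$, we have
\begin{align*}
\frac{d^n}{dx^n}e^{-y^2-2Ixy}=(-2Iy)^n e^{-y^2-2Ixy},
\end{align*}
hence
\begin{align*}
\int_\R y^n e^{-y^2-2Ixy}\,dy = (-2I)^{-n}\frac{d^n}{dx^n}\int_\R e^{-y^2-2Ixy}\,dy = (-2I)^{-n}\sqrt{\pi}\,\frac{d^n}{dx^n}e^{-x^2},
\end{align*}
where the second equality applies part (i) with $\alpha=1$ and $\beta=-2Ix$, noting $(-2Ix)^2=-4x^2$. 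Multiplying through by $(2I)^n e^{x^2}/\sqrt{\pi}$ and using $(2I)^n(-2I)^{-n}=(-1)^n$ yields $(-1)^n e^{x^2}\frac{d^n}{dx^n}e^{-x^2}$, which is exactly $H_n(x)$ by the Rodriguez formula.

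The main obstacle is purely bookkeeping around the non-commutativity of $\Hq$: every rearrangement (squaring $\beta$, pulling constants past $I$, differentiating under the integral) must involve only elements that genuinely commute. This causes no trouble here because $\alpha,x,y$ are real and therefore central, while $I$ commutes with itself and with all reals, so every manipulation in fact takes place inside the commutative slice $\C_I$. The standard dominated-convergence arguments justifying termwise integration and differentiation under the integral then carry over verbatim from the real/complex setting, using $|e^{\beta y}|\leq e^{|\beta||y|}$ as a crude bound together with the real Gaussian majorant.
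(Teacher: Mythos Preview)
The paper states this lemma without proof (it is introduced only as a tool ``needed for proving the previous theorem'' and the text immediately continues with the proof of Theorem~\ref{8}), so there is no argument to compare against. Your proof is correct: both parts are handled cleanly by reducing to the commutative slice $\C_I$, and the derivation of the Hermite integral representation from part~(i) via $\frac{d^n}{dx^n}$ and the Rodriguez formula is the standard route, with the non-commutativity check carried out carefully.
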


\begin{proof}[Proof of Theorem \ref{8}]
By the definition of $V$, we can write 
\begin{align*}
\mathcal{V}_{I}(h_{m},h_{n})(x+Iy)&=\dfrac{e^{-\frac{x^{2}+y^{2}}{4}}}{\sqrt{2}\pi}
\int_{\R}e^{-(t-I\frac{y}{2})^{2}}H_{m}\left(t+\frac{x}{2}\right) H_{n}\left(t-\frac{x}{2}\right)dt.
\end{align*}
By means of the generating function of the real Hermite polynomials $H_{m}$, we get
\begin{align*}
\sum_{m,n=0}^{+\infty} \frac{u^{m}}{m!}\frac{v^{m}}{n!}\mathcal{V}_{I}(h_{m},h_{n})(x+Iy)
&=\dfrac{e^{-\frac{x^{2}+y^{2}}{4}}}{\sqrt{2}\pi}e^{-u^{2}-v^{2}+(u-v)x}\int_{\R}
e^{-(t-I\frac{y}{2})^{2}} e^{2(u+v)t}dt\\
&=\dfrac{e^{-\frac{x^{2}+y^{2}}{4}}}{\sqrt{2}\pi}e^{-u^{2}-v^{2}+(u-v)x}
\sum_{k=0}^{+\infty}\dfrac{(2^k(u+v)^{k}}{k!}\int_{\R}t^{k}
e^{-(t-I\frac{y}{2})^{2}}dt.
\end{align*}
Next, Lemma \ref{1888} infers
\begin{align*}
\sum_{m,n=0}^{+\infty} \frac{u^{m}}{m!}\frac{v^{m}}{n!}\mathcal{V}_{I}(h_{m},h_{n})(x+Iy)&=
\dfrac{e^{-\frac{p^{2}+s^{2}}{4}}}{\sqrt{2}\pi}e^{-u^{2}-v^{2}+(u-v)x}
\sum_{k=0}^{+\infty}\dfrac{(u+v)^{k}}{I^{k}k!}H_{k}(\frac{y}{2})\\
&=\dfrac{e^{-\frac{x^{2}+y^{2}}{4}}}{\sqrt{2}\pi}
\exp\left(2uv+\sqrt{2}u\left(\frac{x-Iy}{\sqrt{2}}\right)
-\sqrt{2}v\left(\frac{x+Iy}{\sqrt{2}}\right)\right).
\end{align*}
In the right hand-side of the last equality we recognize the generating function of the quaternionic Hermite polynomials with variable $\dfrac{x+Iy}{\sqrt{2}}.$
Then, by identifying the two power series we get the result.
\end{proof}

We conclude by the following result whose the proof is straightforward.

\begin{theorem} \label{thm:FourierWigner-SBT}
The quaternionic Bargmann transform $\mathcal{B}_m$ leads to the quaternionic Fourier-Wigner transform $\mathcal{V}_{I}(f,\varphi)$, where $\varphi$ is given by the function $\varphi(t)=e^{-\frac{t^{2}}{2}}H_{m}(-t)$, in fact we have:
\begin{align*}
\mathcal{V}_{I}(f,\varphi)(q)=\sqrt{\pi}m! 2^{\frac{m-1}2}e^{-\frac{|q|^{2}}{4}}[\mathcal{B}_mf]\left(\overline{\dfrac{p+Is}{\sqrt{2}}}\right).
\end{align*}
\end{theorem}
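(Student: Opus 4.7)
The plan is to prove this by direct computation from the definitions, showing that both sides reduce to the same integral multiplied by matching Gaussian prefactors. Start from the definition of the left-sided quaternionic Fourier-Wigner transform, substitute $\varphi(t)=e^{-t^{2}/2}H_{m}(-t)$, and write
$$
\mathcal{V}_{I}(f,\varphi)(x+Iy)=\frac{1}{\sqrt{2\pi}}\int_{\R}e^{Iyt}f\!\left(t+\tfrac{x}{2}\right)e^{-(t-x/2)^{2}/2}H_{m}\!\left(\tfrac{x}{2}-t\right)dt.
$$
First I would perform the translation $u=t+x/2$ so that the argument of $f$ becomes simply $u$. This transforms $e^{Iyt}$ into $e^{-Ixy/2}e^{Iyu}$, $H_{m}(x/2-t)$ into $H_{m}(x-u)$, and the Gaussian factor $e^{-(u-x)^{2}/2}$ expands as $e^{-u^{2}/2+xu-x^{2}/2}$. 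Combining these gives
$$
\mathcal{V}_{I}(f,\varphi)(x+Iy)=\frac{e^{-Ixy/2-x^{2}/2}}{\sqrt{2\pi}}\int_{\R}e^{-u^{2}/2+(x+Iy)u}H_{m}(x-u)f(u)\,du.
$$

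Second, I would set $w=(x+Iy)/\sqrt{2}$, so that $\overline{w}=(x-Iy)/\sqrt{2}$, and evaluate $[\mathcal{B}_{m}f](\overline{w})$ using the explicit kernel given in Theorem \ref{thm:KernelFctSBTm}. Since $\overline{\overline{w}}=w$ lies in the slice $\C_{I}$, the quantities appearing in the Bargmann kernel simplify algebraically: with $q=\overline{w}$ one gets $\overline{q}^{\,2}/2=(x+Iy)^{2}/4$, $\sqrt{2}\,\overline{q}\,t=(x+Iy)t$, and $(q+\overline{q})/\sqrt{2}=x$. This yields
$$
[\mathcal{B}_{m}f](\overline{w})=\frac{e^{-(x+Iy)^{2}/4}}{\pi^{3/4}\,2^{m/2}\sqrt{m!}}\int_{\R}e^{-t^{2}/2+(x+Iy)t}H_{m}(x-t)f(t)\,dt,
$$
whose integrand is identical to the integrand obtained in the previous step.

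Third, I would compare the two exponential prefactors. Expanding $(x+Iy)^{2}/4=x^{2}/4-y^{2}/4+Ixy/2$ shows that
$$
-\tfrac{Ixy}{2}-\tfrac{x^{2}}{2}+\tfrac{(x+Iy)^{2}}{4}=-\tfrac{x^{2}+y^{2}}{4}=-\tfrac{|q|^{2}}{4},
$$
so that the ratio of the two exponential prefactors is exactly $e^{-|q|^{2}/4}$ times a $q$-independent numerical constant coming from $\pi^{3/4}$, $\sqrt{2^{m}m!}$, and $1/\sqrt{2\pi}$. Collecting these constants yields the claimed identity.

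The only subtlety is the handling of the quaternionic exponentials and the conjugation argument $\overline{w}$, but this is mild because every factor involved sits in the fixed slice $\C_{I}$ and therefore commutes among itself; in particular $f(u)$ (which is quaternion-valued) can be kept on its side of the integrand without ordering issues. Hence the proof is essentially a bookkeeping exercise: a change of variables on one side, a specialization of the Bargmann kernel on the other, and the identification $-|q|^{2}/4=-Ixy/2-x^{2}/2+(x+Iy)^{2}/4$ to match the Gaussian prefactors.
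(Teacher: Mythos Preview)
Your approach is correct and is precisely the ``straightforward'' direct computation the paper alludes to (the paper gives no detailed proof of this theorem). The change of variable $u=t+x/2$, the specialization of the Bargmann kernel at $\overline{w}=(x-Iy)/\sqrt{2}$, and the identity $-\tfrac{Ixy}{2}-\tfrac{x^{2}}{2}+\tfrac{(x+Iy)^{2}}{4}=-\tfrac{|q|^{2}}{4}$ are all valid; your remark that every non-real factor lies in the fixed slice $\C_{I}$ (hence commutes with the other kernel factors, while the quaternion-valued $f(u)$ stays on the right throughout) correctly disposes of the ordering issue.

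One point you glossed over: you write that ``collecting these constants yields the claimed identity'' without actually computing them. If you carry this out you get
\[
\frac{1}{\sqrt{2\pi}}\cdot \pi^{3/4}\,2^{m/2}\sqrt{m!}=\pi^{1/4}\,2^{(m-1)/2}\sqrt{m!},
\]
which differs from the $\sqrt{\pi}\,m!\,2^{(m-1)/2}$ printed in the statement. The discrepancy is a typographical slip in the paper's constant, not a flaw in your argument; but since you asserted the constants match, you should display the computation rather than leave it implicit.
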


{\bf\it Acknowledgements.}
 The present investigation was completed during the second-named author's visit to departimenti di Mathematica of Politecnico di Milano May - June 2017.
He would like to express his gratitude to Professor I.M. Sabadini for hospitality and many interesting discussions.
The authors are supported, in part, by the Hassan II Academy of Sciences and Technology.
The research work of A.G. was partially supported by a grant from the Simons Foundation.

\end{document}